\title{K-theory and index pairings for C*-algebras generated by q-normal operators}
\author{{\sc Ismael Cohen} \\
\normalsize
Instituto de F\'isica y Matem\'aticas\\
\normalsize
Universidad Michoacana de San Nicol\'as de Hidalgo, Morelia, M\'exico\\[2pt]
\normalsize
and\\[2pt]
\normalsize
Centro de Ciencias Matem\'aticas, Campus Morelia\\
\normalsize
Universidad Nacional Aut\'onoma de M\'exico (UNAM), Morelia, M\'exico\\
\normalsize
e-mail: {\it ismaelcohen10@gmail.com}\\[16pt] 
{\sc Elmar Wagner\footnote{
corresponding author \ \  {\it MSC2010:} 46L85, 46L80, 46L65, 58B32 \ \  
{\it Key Words:} q-normal op\-era\-tors, quantum complex plane, 
quantum line bundles, K-theory, K-homology, index pairings}
} \\
\normalsize
Instituto de F\'isica y Matem\'aticas\\
\normalsize
Universidad Michoacana de San Nicol\'as de Hidalgo, Morelia, M\'exico\\
\normalsize
e-mail: {\it elmar@ifm.umich.mx}}
\date{}                                           
\newtheorem{thm}{Theorem}
\newtheorem{prop}[thm]{Proposition}
\newtheorem{cor}[thm]{Corollary}
\theoremstyle{definition}
\newtheorem{rem}[thm]{Remark}
\newcommand{\nc}[2]{\newcommand{#1}{#2}}
\newcommand{\rnc}[2]{\renewcommand{#1}{#2}}
\nc{\wegengruen}{\end{equation}}
\newcommand{\Z}{\mathbb{Z}}
\newcommand{\N}{\mathbb{N}}
\newcommand{\R}{\mathbb{R}}
\newcommand{\C}{\mathbb{C}}
\newcommand{\hsp}{{\hspace{-1pt}}}
\newcommand{\hs}{{\hspace{1pt}}}
\newcommand{\hH}{\mathcal{H}}
\newcommand{\cO}{\mathcal{O}}
\newcommand{\K}{{\mathcal{K}}}
\newcommand{\A}{{\mathcal{A}}}
\newcommand{\Cq}{\cO(\C_q)}
\newcommand{\im}{\mathrm{i}}
\newcommand{\e}{\mathrm{e}}
\newcommand{\rmc}{\mathrm{c}}
\newcommand{\ev}{\mathrm{ev}}
\newcommand{\lin}{{\mathrm{span}}}
\newcommand{\id}{{\mathrm{id}}}
\newcommand{\ind}{{\mathrm{ind}}}
\newcommand{\spec}{\mathrm{spec}}
\newcommand{\dom}{\mathrm{dom}}
\newcommand{\Tr}{{\mathrm{Tr}}}
\newcommand{\rmB}{\mathrm{B}}
\newcommand{\rmK}{\mathrm{K}}
\newcommand{\Mat}{\mathrm{Mat}}
\newcommand{\lZ}{\ell_2(\Z)}
\newcommand{\eins}{\hs {\bf 1}}
\newcommand{\ra}{\rightarrow}
\newcommand{\lra}{\longrightarrow}
\newcommand{\ip}[2]{\langle{#1},{#2}\rangle}
\newcommand{\msum}[2]{\underset{{#1}}{\overset{{#2}}{\mbox{$\sum$}}}}
\begin{document}
\maketitle




\begin{abstract}
The paper presents a detailed description of the K-theory and K-homology of 
C*-algebras generated by $q$-normal operators including generators 
and the index pairing. The C*-algebras generated by $q$-normal operators can be viewed 
as a $q$-deformation of the quantum complex plane. In this sense,  we find 
deformations of the classical Bott projections describing complex 
line bundles over the 2-sphere, but there are also simpler generators 
for the $K_0$-groups, for instance 1-dimensional Powers--Rieffel type projections and 
elementary projections belonging to the C*-algebra. 
The index pairing between these projections and generators of the 
even K-homology group is computed, and the result is used to 
express the $K_0$-classes of 
the quantized line bundles of any winding number 
in terms of the other projections. 
\end{abstract}

\section{Introduction}
In this paper we give a complete description of the K-theory of 
all possible C*-al\-geb\-ras generated 
by one of the most prominent relations 
occurring in the theory of $q$-deformed spaces: 
\[                                                                                                                             \label{z}
z\hs z^* =q^2 \hs z^* \hs z, \qquad q \in(0,1). 
\]

The complex *-algebra with generators $z$ and $z^*$ subject to the relation \eqref{z} 
is known as the coordinate ring  $\Cq$ of the quantum complex plane. 
On the analytic side, a densely defined closed linear operator on a Hilbert space satisfying~\eqref{z} 
is called  $q$-normal operator. In other words, any $q$-normal operator yields a 
Hilbert space representation of $\Cq$. 
These representations  have been classified in 
\cite{CSS} and \cite{CW}. The results therein include that non-zero $q$-normal operators are never bounded.
As a consequence, $\Cq$ cannot be equipped with a C*-norm, 
which is consistent with the idea that $\Cq$ should be viewed as the coordinate ring of a 
{\it non}-compact quantum space. 
To study non-compact quantum spaces in the C*-algebra setting, 
one may apply Woronowicz's theory 
of C*-algebras generated by unbounded elements \cite{Wo}. 
This has been done by the authors in \cite{CW}. It turned out that 
the C*-algebra generated by a $q$-normal operator $z$ depends only on the spectrum of 
the self-adjoint operator $|z|= \sqrt{z^*z}$. Among all these C*-algebras, there is a universal one, 
namely when the spectrum of $|z|$ coincides with the whole interval $[0,\infty)$. 
This algebra is viewed as the algebra of continuous functions vanishing 
at infinity on the quantum complex plane and is denoted by $C_0(\C_q)$. 

A natural question is if the passage from the commutative C*-algebra $C_0(\C)$ 
to the $q$-deformed version  $C_0(\C_q)$ preserves topological invariants. 
Preserving topological invariants gives another justification for calling 
$C_0(\C_q)$ the algebra of continuous functions vanishing 
at infinity on the quantum complex plane. 
Nevertheless, we are also interested in detecting quantum effects, i.e., 
situations where the computations of invariants differ from the 
classical case. In favorable situations, the quantum case even leads 
to a simplification. 

To provide answers to these questions, we compute the K-theory for all  
C*-al\-ge\-bras generated by $q$-normal operators. 
Our results in Theorem \ref{T1} show that $C_0(\C_q)$ does actually have the same K-groups 
as the commutative C*-algebra $C_0(\C)$. 
The analogy goes even further since the non-zero elements of 
$K_0(C_0(\C_q))$  can be described by $q$-deformed versions  
of the classical Bott projections describing complex 
line bundles of winding number $n\in\Z$ over the 2-sphere.  
However, as a quantum effect, the $K_0$-classes can also be given by 1-dimensional 
Power--Rieffel type projections in the C*-algebra $C_0(\C_q)$. 
Note that $C_0(\C)$ does not contain non-trivial projections since $\C$ is connected. 

The situation changes if $\spec(|z|) \neq [0,\infty)$. 
Then the $K_1$-group of the C*-algebra 
generated  by the $q$-normal operator $z$ is trivial 
as in the classical case 
but the $K_0$-group 
depends on the number of gaps in the set $\spec(|z|) \cap [q,1]$,  
and any of the groups $\Z^n$, $n>1$, as well as $\oplus_{n\in\N} \Z$  can occur (Theorem \ref{KCzz}). 
Moreover, the $K_0$-groups are generated by elementary 1-dimensional projections 
belonging to the C*-algebra. If one wants to consider all C*-algebras generated 
by a $q$-normal operator as deformations of the complex plane, then  
quantization leads to a whole family of quantum spaces 
with different topological properties 
and also simplifies the description of generators of the $K_0$-groups. 

A practical method of determining $K_0$-classes is by computing the index pairing with 
K-homology classes. We present for all C*-algebras generated by a $q$-normal operator 
a set of generators for the (even) K-homology group 
and show that it gives rise to a non-degenerate pairing with the $K_0$-group. 
The index pairing between these generators and all the projections mentioned above is computed 
(Theorem \ref{TIP} and Theorem \ref{Teo}), 
and the result is used to express the $K_0$-class of the quantized line bundles of winding number $n\in\Z$ 
in terms of the different projections (Corollary~\ref{corPRchi}). 
Moreover, one can find generators of the even K-homology group that compute exactly 
the rank or the winding number of the quantized line bundles. 
Remarkably, for the elementary projections, the computation of the winding number 
 boils down to its simplest form: the computation of a trace 
of a projection onto a finite dimensional subspace. 
Thus one can say that quantization leads to a significant simplification of the index computation. 

The description of the K-groups is only the first step, albeit an essential one, of the larger program 
of understanding the noncommutative geometry of the quantum complex plane. 
A further step would be to find a Dirac operator satisfying the axioms of a spectral triple 
which might be a noncommutative analogue of the Dirac operator on~$\R^2$ with 
the flat metric or of the Dirac operator on the Riemannian 2-sphere in local coordinates. 
In view of a possible $q$-deformed differential calculus associated to 
the commutation relation \eqref{z}, 
it seems to be natural to look for a twisted spectral triple 
in the sense of Connes and Moscovici \cite{CM}. 
However, this admittedly more difficult problem is beyond the scope of the present paper.

\section{C*-algebras generated by q-normal operators} 

\label{sec-qn} 

In this section we collect the most important facts on C*-algebra generated by q-normal operators from \cite{CW} 
(see also \cite{CSS}). 
Let $q\in (0,1)$ and $z$ be a $q$-normal operator, that is, $z$ is a densely defined closed linear operator  
on a Hilbert space $\hH$ such that \eqref{z} holds on $\dom(z^*z)= \dom(zz^*)$. 
By \cite[Corollary 2.2]{CW}, 
the Hilbert space $\hH$ decomposes into the direct sum 
$
\hH = \ker(z)\, \oplus \underset{n\in\Z}{\oplus} \hH_n, 
$
where, up to unitary equivalence, we may assume $\hH_n=\hH_0$. 
For $h\in \hH_0$ and $n\in\Z$ , let $h_n$ denote the vector in $\underset{n\in\Z}{\oplus} \hH_n$ which has 
$h$ in the $n$-th component and $0$ elsewhere. Then the action of $z$ on  $\hH$   
is determined by 
\[ \label{zh} 
z=0 \ \text{ on }\   \ker(z),\qquad    z\hs h_n = q^n (Ah)_{n-1} \ \ \text{on}\ \ \underset{n\in\Z}{\oplus} \hH_n, 
\] 
where $A$ is a self-adjoint operator $A$ on $\hH_0$ such that  $\spec(A)\subset [q,1]$ and  $q$ is not an eigenvalue. 
A non-zero representation of a $q$-normal operator $z$ is irreducible if and only if $\ker(z) =\{0\}$ and 
$\hH_0=\C$. In this case, $A$ can be viewed as a real number in $(q,1]$. 

In \cite[Section 3]{CW}, it has been shown that the C*-algebra generated in the sense of Woro\-no\-wicz \cite{Wo} by a 
$q$-normal operator $z$  depends only on the spectrum 
\[ \label{X} 
X:= \spec(|z|) = \{0\}\cup \underset{n\in\Z}{\cup} q^n \hs \spec(A)    \subset [0,\infty).
\]  
More than that, it can be described as  a C*-subalgebra of the crossed product algebra $C_0( X)\rtimes \Z$ 
without referring to the Hilbert space $\hH$.  Here, for any  $q$-invariant locally compact subset $ X\subset [0,\infty]$ 
(such as $\spec(|z|)$),  the $\Z$-action is given by the automorphism 
\[   \label{alpha}
\alpha_q: C_0(X) \lra C_0(X), \qquad \alpha_q(f)(x):= f(qx). 
\] 

Recall that the crossed product algebra $C_0( X) \rtimes  \Z$ 
can be described as 
enveloping C*-algebra generated by functions $f\in C_0(X)$ and a unitary operator $U$ subjected to the relation 
$$ \label{UfU} 
U^* fU = \alpha_q(f). 
$$
By \cite[Theorem 3.2]{CW}, the C*-algebra generated by a non-zero 
$q$-normal operator $z$ is isomorphic to 
\[ \label{Czz}
C^*_0(z,z^*):=\|\cdot\|\text{-}\mathrm{cls}\hs \Big\{ 
\sum_{\text{{\rm finite}}} f_k \hs U^k\in C_0( X)\rtimes \Z \;:\; k\in\Z,\,\  f_k(0)=0\ \, 
\text{if}\ \,k\neq 0 
\Big\}, 
\]
where $X= \spec(|z|)$ and  
$\|\cdot\|\text{-}\mathrm{cls}$ denotes the norm closure in 
$C_0( X)\rtimes \Z$.   
The case $\spec(|z|)= [0,\infty)$ has the universal property that 
$$
C_0([0,\infty))\rtimes \Z \ni \sum_{\text{{\rm finite}}} f_k \hs U^k \;
\longmapsto\; \sum_{\text{{\rm finite}}} f_k \!\!\upharpoonright_{X} \! U^k  \in C_0( X)\rtimes \Z
$$
yields a well-defined *-homomorphism of crossed product algebras which restricts to the corresponding 
C*-subalgebras defined in \eqref{Czz}.   This was one of the motivations in \cite{CW} 
to define the C*-algebra algebra of 
continuous functions vanishing at infinity on a quantum complex plane as 
the C*-algebra generated by a $q$-normal operator $z$ satisfying $\spec(|z|)=[0,\infty)$, i.e., 
\[  \label{C0Cq}
C_0(\C_q):=\|\cdot\|\text{-}\mathrm{cls}\Big\{ 
\sum_{\text{{\rm finite}}} f_k \hs U^k\in C_0( [0,\infty))\rtimes \Z \,:\, f_k(0)=0\ \, 
\text{if}\ \,k\neq 0 
\Big\} .  
\] 
Furthermore, its unitization  
\[  \label{CSq} 
C(\mathrm{S}^2_q):= C_0(\C_q)\dotplus \C\eins
\]
is viewed as the C*-al\-ge\-bra of continuous functions on  a quantum 2-sphere  
obtained from a one-point compactification of the 
quantum complex plane. 
To distinguish  $C_0(\C_q)$ from the C*-algebras $C^*_0(z,z^*)$ generated by 
$q$-normal operator $z$ such that  $\spec(|z|)\neq [0,\infty)$, we call 
the latter case {\it generic}. 

By \eqref{zh},  $0\in X:=\spec(|z|)$  for any $q$-normal operator $z$. 
As  0 is in\-var\-iant under multiplication by $q$, 
\[     \label{ev0}
\ev_0: \Big\{\sum_{\text{{\rm finite}}} f_k \hs U^k : f_k\in C_0(X) \Big\} \lra \C, \quad 
\ev_0\Big(\sum_{\text{{\rm finite}}} f_k \hs U^k\Big) \,=\, \sum_{\text{{\rm finite}}} f_k (0)
\]
yields a well-defined *\hspace{-0.25pt}-homomorphism, 
in particular a so-called covariant representation, where one may set $\ev_0(U):=1$. Since the C*-norm of the enveloping C*-algebra 
is given by taking the supremum of the operator norms over all *-representations \cite{Will}, 
the map \eqref{ev0} is norm decreasing and thus 
extends to a continuous *-homomorphism $\ev_0: C_0( X) \rtimes \Z \ra \C$. 
From the definitions of 
$C_0( X) \rtimes \Z$, \,$C^*_0(z,z^*)$ and  $\ev_0$, we get the following 
exact sequence of C*-algebras: 
\begin{equation}\label{ext}
\xymatrix{
 0\;\ar[r]&\; C_0(X \setminus \{0\})\rtimes \Z\;\ar@{^{(}->}[r]^{\quad\mathrm{\iota}} &\; C^*_0(z,z^*) 
\;\ar[r]^{\ \ \ \ev_0} &\; \C\;\ar[r] &\,0\,.}
\end{equation}
For $C^*_0(z,z^*)\cong C_0(\C_q)\subset C_0([0,\infty))\rtimes \Z$, this exact sequence  becomes 
\begin{equation}\label{Cext}
\xymatrix{
 0\;\ar[r]&\; C_0((0,\infty))\rtimes \Z\;\ar@{^{(}->}[r]^{\quad\iota} &\; C_0(\C_q) 
\;\ar[r]^{\ \ \ \ev_0} &\; \C\;\ar[r] &\,0\,.}
\end{equation}

Note finally that we can identify the unit in $C^*_0(z,z^*)\dotplus \C\eins$ 
with the constant function $1 \in C(X\cup \{\infty\})$, \,$1(x)=1$. Then 
\begin{align} \label{Czz1}
&C^*_0(z,z^*)\dotplus \C\eins \\ 
&=\|\hsp\cdot\hsp \|\text{-}\mathrm{cls}\hs \Big\{ \sum_{\text{{\rm finite}}} f_k \, U^k\in C( X\hsp\cup\hsp \{\infty\})\rtimes \Z \;:\;   
k\in\Z ,\  f_k(0)=f_k(\infty)=0\ \hs\text{if }\hs  k\neq 0 \Big\}, \nonumber
\end{align} 
and the natural projection $ C^*_0(z,z^*)\dotplus \C\eins \lra  (C^*_0(z,z^*)\dotplus \C\eins) / C^*_0(z,z^*) \cong \C$ 
can be written 
\[     \label{evty}
\ev_\infty:C^*_0(z,z^*)\dotplus \C\eins  \lra \C, \quad 
\ev_\infty \Big(\sum_{\text{{\rm finite}}} f_k \hs U^k\Big) \,=\,  f_0(\infty). 
\]
The formula \eqref{ev0} remains unchanged for the unitalization  $C^*_0(z,z^*)\dotplus \C$. 

\section{K-theory} 

\subsection{K-theory of the quantum complex plane}  \label{secKCq}

There are several ways to compute the K-theory of $C_0(\C_q)$. 
To keep the paper elementary, we will use the 
standard six-term exact sequence in K-theory for C*-algebra extensions. 
We could have used Exel's generalized 
Pimsner--Voiculescu six-term exact sequence for generalized 
crossed product algebras defined by partial automorphisms \cite{Ex} 
but the gain would be minor at the cost of introducing more 
terminology. 

The C*-algebra extension \eqref{Cext} yields the following six-term exact sequence of K-theory: 
\begin{equation}  \label{six-term}
\xymatrixcolsep{3pc}
\xymatrix{  
 \ K_0 (C_0((0,\infty))\rtimes \Z ) \ \ar[r]^-{\iota_\ast} & 
  \ K_0 ( C_0(\C_q))\   \ar[r]^-{{\ev_0}_\ast} &
  \ K_0 (\C)\ \ar[d]^{\delta_{01}}\\
\ K_1 (\C)\ \ar[u]^{\delta_{10}} &
  \ K_1 ( C_0(\C_q))\  \ar[l]_-{\ \ {\ev_0}_\ast} &
\ K_1 (C_0((0,\infty))\rtimes \Z )\,.  \ar[l]_-{\ \ \iota_\ast}  
  }
\end{equation}

To resolve \eqref{six-term}, we need to know 
$K_0 (C_0((0,\infty))\rtimes \Z )$ and $K_1(C_0((0,\infty))\rtimes \Z )$. 
These K-groups can  easily obtained from the 
Pimsner--Voiculescu six-term exact sequence \cite[Theorem 10.2.1]{Bla} by 
noting that the automorphism $\alpha_q$ is homotopic 
to $\id=\alpha_1=\lim_{q\ra 1} \alpha_q$ so that the induced group homomorphisms yield 
$(\alpha_q)_* - \id=0$.  Another way of deducing these K-groups would be to 
consider a continuous field of C*-algebras (see e.g.\ \cite{Ro}), 
or to use the fact that $\Z$ acts freely and properly on $(0, \infty)$ so that 
$C_0((0,\infty))\rtimes \Z$ is Morita equivalent to 
$C((0,\infty)/\Z) \cong C(\mathrm{S}^1)$ (see e.g.\ \cite[Remark 4.16]{Will}). 
In any case, the outcome is 
\begin{equation} \label{K-cross0}
 K_0 (C_0((0,\infty))\rtimes \Z ) =\Z,  \qquad 
 K_1 (C_0((0,\infty))\rtimes \Z ) =\Z.
\end{equation}
The isomorphism  $K_1 (C_0((0,\infty))\rtimes \Z )\cong K_1 (C_0((0,\infty)))$ in 
the Pimsner--Voiculescu six-term exact sequence also shows that a  
generator of $K_1 (C_0((0,\infty))\rtimes \Z )$ 
may be  given by the $K_1$-class of the  unitary 
\[ \label{h}
 \e^{-2\pi\im h} \,\in\, C_0((0,\infty))\dotplus \C  \,\subset  \,
\big(C_0((0,\infty))\rtimes \Z\big) \dotplus \C,   
\]
where $h: [0,\infty) \ra \R$ denotes a continuous function such that $h(0)=1$ and $\underset{t\to\infty}{\lim} h(t)=0$. 
As $h\in C_0(\R_+)\subset C_0(\C_q)$ is a self-adjoint lift of the trivial projection $1\in\C$ under $\ev_0$, we get 
$\delta_{01}([1])= [\e^{-2\pi\im h}] \in K_1 (C_0((0,\infty))\rtimes \Z )$, see e.g.\ \cite[p.\ 172]{wegge}. 
Since $\delta_{01}$ maps a generator into a generator, it is an isomorphism and thus 
the adjacent homomorphisms ${\ev_0}_\ast$  and $\iota_\ast$ 
in \eqref{six-term} are 0. 

Inserting $K_0 (\C ) =\Z$, \,$K_1 (\C ) = 0$ and \eqref{K-cross0} into \eqref{six-term} 
yields $K_0 ( C_0(\C_q)\cong \Z$ and 
$K_1 ( C_0(\C_q)\cong 0$. 
Adjoining furthermore a unit to $C_0(\C_q)$, we have proven the following theorem: 
\begin{thm} \label{T1}
The $K$-groups of the C*-algebras $C_0(\C_q)$ and $C(\mathrm{S}^2_q)$ 
from Equations  \eqref{C0Cq} and \eqref{CSq}, respectively, 
are given by 
\begin{align}
 K_0(C_0(\C_q))&\cong \Z, & K_0(C(\mathrm{S}^2_q))&\cong \Z\oplus \Z, \\
K_1(C_0(\C_q))&\cong 0,
 &   K_1(C(\mathrm{S}^2_q))&\cong 0. 
\end{align}
\end{thm}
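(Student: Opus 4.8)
The plan is to extract all four $K$-groups from the short exact sequence \eqref{Cext} via its associated six-term exact sequence \eqref{six-term}, and then to obtain the groups for $C(\mathrm{S}^2_q)$ from the standard behaviour of $K$-theory under adjoining a unit. The whole computation reduces to knowing the $K$-theory of the ideal $C_0((0,\infty))\rtimes\Z$ together with the value of a single boundary map.

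First I would compute $K_*(C_0((0,\infty))\rtimes\Z)$. The cleanest route is the Pimsner--Voiculescu sequence: since the generating automorphism $\alpha_q$ is connected to $\id$ through the automorphisms $f\mapsto f(s\,\cdot)$, $s\in[q,1]$, it induces the identity on $K$-theory, so $(\alpha_q)_*-\id=0$ and the sequence degenerates, giving $K_0\cong K_1\cong\Z$ as recorded in \eqref{K-cross0}. (Alternatively, freeness and properness of the $\Z$-action on $(0,\infty)$ give a Morita equivalence with $C(\mathrm{S}^1)$, with the same outcome.) For the next step I would keep track of a concrete generator of $K_1$, namely the class of the unitary $\e^{-2\pi\im h}$ from \eqref{h}, where $h$ is continuous with $h(0)=1$ and $\lim_{t\to\infty}h(t)=0$.

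The decisive step is to evaluate the connecting maps in \eqref{six-term} after inserting $K_0(\C)=\Z$ and $K_1(\C)=0$. The index map $\delta_{10}$ starts at $K_1(\C)=0$ and is therefore zero, so everything hinges on the exponential map $\delta_{01}:K_0(\C)\to K_1(C_0((0,\infty))\rtimes\Z)$. I would compute it from the definition: the generator $[1]\in K_0(\C)$ admits the self-adjoint lift $h\in C_0(\R_+)\subset C_0(\C_q)$ along $\ev_0$, whence $\delta_{01}([1])=[\e^{-2\pi\im h}]$. As this is exactly the chosen generator of $K_1$ of the ideal, $\delta_{01}$ is an isomorphism. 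Exactness then forces the two adjacent maps $\ev_{0*}$ (on $K_0$) and $\iota_*$ (on $K_1$) to vanish, so that $\iota_*:K_0(C_0((0,\infty))\rtimes\Z)\to K_0(C_0(\C_q))$ is an isomorphism and $K_1(C_0(\C_q))=0$. This yields $K_0(C_0(\C_q))\cong\Z$ and $K_1(C_0(\C_q))\cong 0$.

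Finally, since $C(\mathrm{S}^2_q)=C_0(\C_q)\dotplus\C\eins$ is the unitization of a non-unital C*-algebra, I would invoke the general fact that adjoining a unit leaves $K_1$ unchanged and adds a free summand $\Z$ to $K_0$ (generated by the class of the adjoined unit and split off by $\ev_\infty$), giving at once $K_0(C(\mathrm{S}^2_q))\cong\Z\oplus\Z$ and $K_1(C(\mathrm{S}^2_q))\cong 0$. The only genuinely delicate point in the argument is the computation of $\delta_{01}$: one must check both that $h$ is an admissible self-adjoint lift of $1$ under $\ev_0$ and that $[\e^{-2\pi\im h}]$ really generates $K_1$ of the ideal, the latter depending on the explicit identification $K_1(C_0((0,\infty))\rtimes\Z)\cong K_1(C_0((0,\infty)))$ coming from the Pimsner--Voiculescu (or Morita) picture. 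Everything else is a formal diagram chase.
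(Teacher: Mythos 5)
Your proposal is correct and follows essentially the same route as the paper: the same six-term exact sequence for the extension \eqref{Cext}, the same Pimsner--Voiculescu (or Morita) computation giving \eqref{K-cross0}, and the same identification $\delta_{01}([1])=[\e^{-2\pi\im h}]$ via the self-adjoint lift $h$, which forces the adjacent maps to vanish. The only cosmetic difference is that you spell out the unitization step for $C(\mathrm{S}^2_q)$, which the paper dispatches in one line.
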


\begin{rem}  \label{R2} 
In Corallary \ref{C1} below, we will show that generators for $K_0(C_0(\C_q))$ are given 
by the $K_0$-classes $[P_{\pm 1}] -[1]$  
with the Bott projections  $P_{\pm 1}$  defined in \eqref{BP} and  \eqref{BPN}, 
and by the $K_0$-class $[R_1]$ 
with the Powers--Rieffel type projection $R_1$  defined in \eqref{PRP}. As a trivial consequence, 
each of these elements together with $[1]$ generate $K_0(C(\mathrm{S}^2_q))$. 
\end{rem}  

\begin{rem} 
Note that the $K$-groups  in Theorem \ref{T1} 
are isomorphic to the classical counterparts since  
$K_0(C_0(\C))\cong \Z$, \,$K_1(C_0(\C))= 0$, 
\,$K_0(C(\mathrm{S}^2))\cong \Z\oplus \Z$  and  $K_1(C(\mathrm{S}^2))=0$.  
\end{rem}  

\subsection{K-theory of C*-algebras generated by generic q-normal operators} 
\label{generic}

In this section, we describe the K-theory of the C*-algebra $C^*_0(z,z^*)$  defined in \eqref{Czz}, 
where $z$ is a $q$-normal operator such that $X:=\spec(|z|) \neq [0,\infty)$. By replacing $z$ by $t z$, \,$t>0$, 
we may assume that $1\notin\spec(|z|)$. From the $q$-invariance of $\spec(|z|)$ and 
the properties of the self-adjoint operator $A$ described below \eqref{zh}, 
we conclude that $Y:=\spec(|z|) \cap   (q,1) = \spec(A)$ is a compact subset of $(q,1)$ and 
$X = \{0\}\cup \underset{n\in\Z}{\cup} q^n \hs Y$, see \eqref{X}.  

Similar to the previous section, we consider the standard 
six-term exact sequence associated to the C*-algebra extension \eqref{ext}, i.e., 
\begin{equation}  \label{stX}
\xymatrixcolsep{3pc}
\xymatrix{  
 \ K_0 (C_0(X\!\setminus\!\{0\})\rtimes \Z ) \ \ar[r]^-{\iota_\ast} & 
  \ K_0 ( C^*_0(z,z^*))\   \ar[r]^-{{\ev_0}_\ast} &
  \ K_0 (\C)\ \ar[d]^{\delta_{01}}\\
\ K_1 (\C)\ \ar[u]^{\delta_{10}} &
  \ K_1 ( C^*_0(z,z^*))\  \ar[l]_-{\ \ {\ev_0}_\ast} &
\ K_1 (C_0(X\!\setminus\!\{0\})\rtimes \Z )\,.  \ar[l]_-{\ \ \iota_\ast}  
  }
\end{equation} 
Now we need to know the K-groups of the crossed product algebra $C_0(X\!\setminus\!\{0\})\rtimes \Z$. 
Clearly, the $\Z$-action on the disjoint union $X\hsp\setminus\hsp\{0\}=\underset{n\in\Z}{\cup} q^n \hs Y$ 
is free and proper, and the quotient space
$(X\hsp\setminus\hsp \{0\})/\Z$ can be identified with $Y$. It follows from \cite[Corollary 15]{G} that 
$C_0(X\!\setminus\!\{0\})\rtimes \Z \cong C(Y) \otimes \rmK(\ell_2(\Z) )$.  
By C*-stabilization, we have 
\[  \label{Kcross}
K_i(C_0(X\!\setminus\!\{0\})\rtimes \Z) \cong K_i(C(Y) \otimes \rmK(\ell_2(\Z) )) \cong K_i(C(Y)), \quad i=0,1, 
\] 
where a set of generators of $K_i(C_0(X\!\setminus\!\{0\})\rtimes \Z)$ 
is given by a set of generators of $K_i(C(Y))$ under the embeddings  
$C(Y)  \subset C_0(X\!\setminus\!\{0\}) \subset C_0(X\!\setminus\!\{0\})\rtimes \Z$. 
So we are reduced to determining the K-theory of $C(Y)$. 

The K-groups of $C(Y)$ for arbitrary compact planar sets $Y\subset \C$ are well known 
and a characterization of them can be found, e.g., in \cite[Section 7.5]{HR}. 
For an explicit description of the generators and later reference, we will introduce some notation 
in the next remark and then state the result in a proposition. 

\begin{rem}  \label{remJ}
Let $Y\subset (q,1)$ be a compact set and let 
$s\in (q,1)$ denote the maximum of $Y$. 
Consider the family  $\{ I_j: j\in J\}$  of connected components 
of $(q,s]\setminus Y$. These 
connected components are of course open intervals in $(q,1)$.  
If $Y$ has a finite number of connected components, say $n\in \N$, then $J$ has the same number of elements and 
we may choose $J=\{1,\ldots,n\}$. If $Y$ has an infinite (possibly uncountable) number of connected components,  
then $J$ is countable infinite and we may take $J=\N$. 
For each $j\in J$, we choose a $c_j\in I_j$ so that we arrive at the following situation: 
\[ \label{Ij}
Y^{\rmc}   :=   (q,1)\!\setminus \! Y   =   (s,1) \cup \underset{j\in J}{\cup}I_j, \ \  
 I_j  \cap   I_k   =  \emptyset \text{ if } j   \neq   k , \ \
c_j\in I_j   \subset    (q,s) . 
\]
Moreover,  we will frequently use the indicator function of a subset $A\subset \R$ given by 
$\chi_{A}(t) := 1$ for  $ t\in A$ and $0$ otherwise. 
Note that $\chi_{(x,y)}$ is a continuous projection for all $x,y\in Y^\rmc := (q,1)\setminus Y$,  i.e., 
$\chi_{(x,y)}\in C(Y)$ and  $(\chi_{(x,y)} )^2 =\chi_{(x,y)}  = (\chi_{(x,y)})^*$. 
\end{rem}

\begin{prop}  \label{KCY} 
Let $Y\subset (q,1)$ be a non-empty compact set. For all such sets,  
\[ \label{K1}
K_1(C(Y))\, = \, 0. 
\]
If $Y$ has $n\in \N$ connected components, then
\[ 
K_0(C(Y)) \, \cong \,  \Z^n.
\]
If $Y$ has an infinite number of connected components, then 
\[ \label{infinite}
K_0(C(Y))\,  \cong\,  \underset{n\in\N}{\oplus}\,  \Z \qquad \text{(infinite direct sum}). 
\] 
For any choice of real numbers $c_j\in I_j$ as in Remark \ref{remJ}, 
the equivalence classes of the projections $\chi_{(c_j,1)}\in C(Y)$, \,$j\in J$,  generate freely $K_0(C(Y))$. 
\end{prop}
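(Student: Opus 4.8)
The plan is to prove Proposition \ref{KCY} by computing the K-theory of $C(Y)$ for a compact planar set $Y \subset (q,1) \subset \R$ directly, exploiting that $Y$ sits on the real line so that its complement is a disjoint union of open intervals. The two key facts I would establish are: (i) $K_1(C(Y)) = 0$, and (ii) the $K_0$-group is free abelian with rank equal to the number of connected components, freely generated by the classes $[\chi_{(c_j,1)}]$. The natural tool is a direct limit / inverse limit argument, approximating $Y$ from the outside by finite unions of closed intervals.

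\smallskip
\noindent\emph{First, the finite case.} Suppose $Y$ has $n$ connected components $Y_1,\dots,Y_n$, each a compact subinterval of $(q,1)$ (possibly a point). Since each $Y_i$ is a compact interval, $C(Y_i)$ is homotopy equivalent to $\C$, so $K_0(C(Y_i)) \cong \Z$ and $K_1(C(Y_i)) = 0$. Because the components are pairwise separated closed sets, $C(Y) \cong \bigoplus_{i=1}^n C(Y_i)$ as a direct sum of C*-algebras, and K-theory is additive over finite direct sums; this gives $K_0(C(Y)) \cong \Z^n$ and $K_1(C(Y)) = 0$. For the generators, I would note that the indicator function $\chi_{(c_j,1)}$, restricted to $Y$, equals $1$ on the components lying to the right of $c_j$ and $0$ on those to the left. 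Ordering the components so that $Y_n$ is the rightmost, the projections $\chi_{(c_j,1)}$ corresponding to the $n-1$ gaps $I_j$ between consecutive components, together with the unit $1 = \chi_{(q,1)}$-type class represented by the full function $1\in C(Y)$ (equivalently the rightmost component), form a basis: the transition matrix from these to the rank-one idempotents $\chi_{Y_i}$ is unipotent triangular, hence invertible over $\Z$, so the classes freely generate.

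\smallskip
\noindent\emph{Second, the infinite case.} Here I would write $Y$ as an intersection of a decreasing sequence of sets $Y^{(m)}$, each a finite union of closed intervals, with $Y = \bigcap_m Y^{(m)}$; concretely, cover the at most countably many gaps and fatten $Y$ slightly at stage $m$ so that $Y^{(m)}$ has finitely many components and $Y^{(m+1)} \subset Y^{(m)}$. The restriction maps $C(Y^{(m)}) \to C(Y^{(m+1)})$ make $C(Y) = \varinjlim C(Y^{(m)})$ a direct limit of C*-algebras, and since K-theory commutes with direct limits (continuity of $K_i$), we get $K_i(C(Y)) \cong \varinjlim K_i(C(Y^{(m)}))$. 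The $K_1$-terms all vanish, so $K_1(C(Y)) = 0$. For $K_0$, the connecting maps in the direct system are the inclusions $\Z^{n_m} \hookrightarrow \Z^{n_{m+1}}$ corresponding to the splitting of existing components into finer ones, which in terms of the $[\chi_{(c_j,1)}]$ generators are injections sending basis vectors to basis vectors; the colimit is then the free abelian group $\bigoplus_{j\in J}\Z$ on the countable index set $J$, with the projections $\chi_{(c_j,1)}$ as free generators.

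\smallskip
\noindent The main obstacle I anticipate is bookkeeping rather than conceptual: I must check that the chosen points $c_j\in I_j$ really do yield projections in $C(Y)$ whose classes are \emph{independent} of the choice of $c_j$ within each gap (this is immediate since moving $c_j$ inside $I_j$ does not change $\chi_{(c_j,1)}|_Y$), and that the direct-limit connecting maps are described correctly so that no generator is lost or identified in the colimit. A cleaner alternative that avoids the limit argument would be to invoke the known classification of $K_*(C(Y))$ for planar $Y$ from \cite[Section 7.5]{HR} and merely verify that the specific projections $\chi_{(c_j,1)}$ realize the stated free generators; given that the excerpt already cites \cite{HR} for the abstract group structure, I expect the actual labor of the proposition to lie entirely in identifying these explicit generators, for which the triangular-transition-matrix computation in the finite case is the decisive step.
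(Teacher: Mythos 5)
Your proposal is correct, but it follows a genuinely different route from the paper. The paper does not decompose $Y$ or pass to a limit at all: it quotes the isomorphism $h_0\colon K_0(C(Y))\to \check H^0(Y,\Z)$ onto the group of continuous integer-valued functions on $Y$ (Higson--Roe, Prop.\ 7.5.2), notes that any such function is locally constant and, by compactness, has only finitely many jumps, and then writes it explicitly as a $\Z$-linear combination of the functions $\chi_{(c_j,1)}$; injectivity of $h_0$ together with the linear independence of these functions shows that the map $\oplus_{j\in J}\Z\to K_0(C(Y))$, $(g_j)\mapsto\sum g_j[\chi_{(c_j,1)}]$, is an isomorphism, and $K_1=0$ comes from the same source since $\C\setminus Y$ has no bounded component. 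This handles the finite and infinite cases uniformly and makes the identification of the free generators essentially a one-line computation with rank functions --- which is exactly the ``cleaner alternative'' you sketch in your last paragraph. Your main argument (finite disjoint unions of intervals plus continuity of $K_*$ under the direct limit $C(Y)=\varinjlim C(Y^{(m)})$ for a decreasing outer approximation by finite unions of closed intervals) is more self-contained, needing only $K_*$ of an interval and additivity/continuity of K-theory, but it shifts the burden onto the bookkeeping you flag: you must check that distinct gaps of $Y^{(m)}$ remain in distinct gaps of $Y^{(m+1)}$ (they do, being separated by a component of $Y^{(m)}$ which still meets $Y$), so that the connecting maps really send basis vectors to distinct basis vectors and no generators are identified in the colimit, and that every gap of $Y$ eventually appears at some finite stage. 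One small slip: in the finite case the constant function $1$ on $Y$ is $\sum_i\chi_{Y_i}$, not the class of the rightmost component; this does not affect your unipotent-transition-matrix argument, which is the correct way to see that the $n$ classes $[\chi_{(c_j,1)}]$ (including the one for the leftmost gap, which restricts to $1$) form a $\Z$-basis.
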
 
\begin{proof} 
Equation \eqref{K1} follows immediately from \cite[Propositions 7.5.2 and 7.5.3]{HR}  
since $\C\setminus Y$ has no bounded connected component.  

For a description of $K_0(C(Y))$ by a complete set of generators, 
we first identify projections $P\in \Mat_k(C(Y))$, \,$k\in\N$, 
with projection-valued continuous functions $P: Y \ra \Mat_k(\C)$. 
As in  \cite[Definition 7.5.1]{HR}, 
let $\check H^0(Y,\Z)$ denote the group of continuous, integer-valued functions on $Y$. 
By  \cite[Proposition 7.5.2]{HR}, the map 
\[  
h_0 : K_0(C(Y)) \lra \check H^0(Y,\Z), \qquad h_0([P])(t):=\mathrm{rank}(P(t)), 
\] 
is an group isomorphism. 
Since the continuous function $h_0([P])$ takes values in a discrete set, it is locally constant. 
By the compactness of $Y$, 
it can only have a finite number of jumps 
and each jump can only occur if the distance of 
neighboring points is greater than 0. 
Hence there exist  $c_{j_1}, \ldots, c_{j_{k_0}} \in (q,1)\setminus Y$ 
such that $c_{j_k}$ belongs to the connected component $I_{j_k}$ 
as described in \eqref{Ij}, $c_{j_1}<  \cdots < c_{j_{k_0}}$ 
and $h_0([P])$ is constant on $(c_{j_k}, c_{j_{k+1}})\cap Y$ for $k=1,\ldots, k_0$, where 
we set $c_{j_{k_0+1}}:=1$. 
Let $n_k\in\N_0$ such that  $h_0([P])(t) = n_k\in\N_0$ for  all $t\in (c_{j_k}, c_{j_{k+1}})\cap Y$.   
Then 
\[ \label{Psur}
h_0([P])= \msum{k=1}{k_0}  n_k \hs \chi_{(c_{j_k},c_{j_{k+1}})} = 
 n_{1}\hs  \chi_{(c_{j_1},1)} +  \msum{k=2}{k_0} \hs (n_{k} - n_{k-1})\hs \chi_{(c_{j_k},1)}. 
\]
Define $[p]\in K_0(C(Y))$ by 
\[  \label{psur} 
[p] :=  \msum{k=1}{k_0}  n_k \hs [\chi_{(c_{j_k},c_{j_{k+1}})}] 
 =   n_{1}\hs  [\chi_{(c_{j_1},1)}]  +  \msum{k=2}{k_0} \hs (n_{k} - n_{k-1})\hs [\chi_{(c_{j_k},1)}].  
\]
Since obviously $h_0([p]) = h_0([P])$, it follows from the injectivity of $h_0$ that $[p] = [P]$ in $K_0(C(Y))$. 

Now consider the group homomorphism 
\[ \label{Phi} 
\Phi :  \underset{j\in J}{\oplus}\,  \Z \lra K_0(C(Y)), \qquad 
\Phi( (g_j)_{j\in J}):= \overset{N}{\underset{k=1}{\mbox{$\sum$}}}  \, g_{n_k} \hs [\chi_{(c_{n_k},1)}],
\] 
where $N\in\N$ and $g_{n_1}, \ldots, g_{n_N}$ are 
the non-zero elements of $(g_j)_{j\in J}\in \oplus_{j\in J} \Z\,{\setminus}\hs \{0\}$. 
From the representation  \eqref{psur} of any $K_0$-element, 
one sees  immediately that $\Phi$ is surjective. The injectivity follows from the 
linear independence of the set  of functions $\{ \chi_{(c_{n_j},1)}: j\in J\}$ 
since $h_0\big(\Phi( (g_j)_{j\in J})\big)= \overset{N}{\underset{k=1}{\mbox{$\sum$}}}  \, g_{n_k} \hs \chi_{(c_{n_k},1)}$ 
and $\ker(h_0) =\{0\}$.  Hence $\Phi$ defines an isomorphism and 
$\{ [ \chi_{(c_{j},1)}  ] \hsp=\hsp \Phi((\delta_{ji})_{i\in J}) \hs:\hs j\hsp\in \hsp J\}$ yields a set of generators. 
\end{proof} 

Returning to the computation of the $K$-groups of $C^*_0(z,z^*)$, we can now insert the results 
of Proposition \ref{KCY} and Equation \eqref{Kcross} into the six-term exact sequence \eqref{stX}. 
Then the second line has only trivial groups in the corners, thus $K_1(C^*_0(z,z^*))\cong 0$. 
The remaining short exact sequence is obviously split exact with a splitting homomorphism 
\mbox{$\sigma :  K_0 (\C) \ra K_0 ( C^*_0(z,z^*))$} given by sending the generator $[1]\in K_0 (\C)$ to 
class of the continuous projection $\chi_{[0,q)} \in C_0(X)\subset C^*_0(z,z^*)$.  
Thus, to obtain $K_0(C^*_0(z,z^*))$, one only has 
to add one free generator to $K_0(C_0(X\!\setminus\!\{0\})\rtimes \Z) \cong K_0(C(Y))$, 
for instance, we may take $[\chi_{[0,q)}]= \sigma([1])$. This proves the following theorem. 

\begin{thm} \label{KCzz}
Let $z$ be a $q$-normal operator such that $X:=\spec(|z|) \neq [0,\infty)$ 
and assume without loss of generality that $q, 1\notin X$. 
If $(q,1) \cap X$ has $n\in \N$ connected components, then 
$$
K_0( C^*_0(z,z^*))\, \cong\, \Z^{n+1},\quad \quad 
K_1( C^*_0(z,z^*)) \,=\,0. 
$$
If $(q,1) \cap X$ has an infinite number of  connected components, then 
$$
K_0( C^*_0(z,z^*))\, \cong\, \underset{n\in\N}{\oplus}\,  \Z \quad \text{(infinite direct sum}),  \quad 
K_1( C^*_0(z,z^*)) \,=\,0. 
$$
A set of generators for $K_0( C^*_0(z,z^*))$ is given by $[\chi_{[0,q)}]$ and $[\chi_{(c_j,1)}]$, $j\in J$, 
where $J$ and  $c_j\in (q,1)$ are defined as in Remark \ref{remJ} for $Y:= (q,1)\cap X$.
\end{thm}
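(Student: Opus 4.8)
The plan is to resolve the six-term exact sequence \eqref{stX} by feeding in the two ingredients already established: the identification \eqref{Kcross} of $K_i(C_0(X\!\setminus\!\{0\})\rtimes\Z)$ with $K_i(C(Y))$ for $Y:=(q,1)\cap X$, and the full computation of $K_i(C(Y))$ in Proposition \ref{KCY}. First I would substitute $K_0(\C)\cong\Z$, $K_1(\C)=0$, together with $K_1(C(Y))=0$ from \eqref{K1}, into \eqref{stX}. The bottom row of the diagram then reads $0\to K_1(C^*_0(z,z^*))\to 0$, which forces $K_1(C^*_0(z,z^*))\cong 0$ immediately, so the entire content of the theorem lies in the top row.

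Next I would isolate the surviving portion of the sequence. With $K_1(C(Y))=0$ feeding in from the right, the connecting map $\delta_{10}\colon K_1(\C)=0\to K_0(C_0(X\!\setminus\!\{0\})\rtimes\Z)$ is zero, so $\iota_\ast$ on the top row is injective; and the connecting map $\delta_{01}\colon K_0(\C)\to K_1(C_0(X\!\setminus\!\{0\})\rtimes\Z)=0$ is likewise zero, so $\ev_{0\ast}\colon K_0(C^*_0(z,z^*))\to K_0(\C)$ is surjective. What remains is the short exact sequence
\[
0\lra K_0(C_0(X\!\setminus\!\{0\})\rtimes\Z)\overset{\iota_\ast}{\lra} K_0(C^*_0(z,z^*))\overset{\ev_{0\ast}}{\lra}\Z\lra 0.
\]
The key step is to exhibit a splitting. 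Since $0$ is isolated in $X$ (it is $q$-invariant and bounded away from $Y$), the indicator $\chi_{[0,q)}$ is a genuine continuous projection in $C_0(X)\subset C^*_0(z,z^*)$, and $\ev_0(\chi_{[0,q)})=1$; hence $\sigma\colon[1]\mapsto[\chi_{[0,q)}]$ is a right inverse of $\ev_{0\ast}$. A split short exact sequence of abelian groups gives $K_0(C^*_0(z,z^*))\cong K_0(C(Y))\oplus\Z$, with generators the images under $\iota_\ast$ of the generators $[\chi_{(c_j,1)}]$ of $K_0(C(Y))$ (these survive via the embedding $C(Y)\subset C_0(X\!\setminus\!\{0\})\subset C^*_0(z,z^*)$ from \eqref{Kcross}) together with $[\chi_{[0,q)}]=\sigma([1])$.

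Finally I would read off the two cases from Proposition \ref{KCY}: when $(q,1)\cap X$ has $n$ connected components, $K_0(C(Y))\cong\Z^n$ and the direct sum with the extra $\Z$ gives $\Z^{n+1}$; when there are infinitely many components, $K_0(C(Y))\cong\oplus_{n\in\N}\Z$, and adjoining one more free generator leaves an infinite direct sum of copies of $\Z$ (absorbing the extra summand). The only point requiring care—the main obstacle, though a mild one—is verifying that $\chi_{[0,q)}$ really is a projection in $C^*_0(z,z^*)$ and that $\ev_{0\ast}$ sends its class to the generator of $K_0(\C)\cong\Z$; this rests on $\{0\}$ being both open and closed in $X$ (so that $\chi_{[0,q)}\in C_0(X)$ is continuous with value $1$ at $0$) and on the defining condition $f_0(0)$ being unconstrained in \eqref{Czz}, so that $\chi_{[0,q)}$ with vanishing off-diagonal part lies in the subalgebra.
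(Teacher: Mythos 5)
Your proposal is correct and follows essentially the same route as the paper: insert Proposition \ref{KCY} and \eqref{Kcross} into the six-term sequence \eqref{stX}, conclude $K_1(C^*_0(z,z^*))=0$ from the trivial corners of the bottom row, and split the remaining short exact sequence by $\sigma([1])=[\chi_{[0,q)}]$, which adds one free generator to $K_0(C(Y))$. One small correction to your final justification: $0$ is \emph{not} isolated in $X$ and $\{0\}$ is not open in $X$ (indeed $q^n y\to 0$ for $y\in Y$ as $n\to\infty$); the continuity of $\chi_{[0,q)}$ on $X$ instead follows because $X$ has a gap around $q$ --- namely $X\cap[q\hs s,\, m]=\emptyset$ with $s=\max Y$ and $m=\min Y$, so $X\cap[0,q)$ is clopen in $X$ --- which is all that is needed for $\chi_{[0,q)}$ to be a continuous projection in $C_0(X)\subset C^*_0(z,z^*)$ with $\ev_0(\chi_{[0,q)})=1$.
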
 
\begin{rem}
For all $q$-normal operators $z$ such that $X:=\spec(|z|) \neq [0,\infty)$, we see by Theorem \ref{KCzz} that 
$K_0( C^*_0(z,z^*))$ contains more copies of $\Z$ than $K_0(C_0(\C))\cong \Z$ since $(q,1) \cap X$ 
has at least one connected component. 
This observation justifies the definition of $C_0(\C_q)$ as the C*-algebra generated by a 
$q$-normal operator $z$ such that $\spec(|z|) = [0,\infty)$ 
because only then the equalities $K_0(C_0(\C_q))= K_0(C_0(\C))$ and  $K_1(C_0(\C_q))= K_1(C_0(\C))$ hold. 

However, if one wants to consider the C*-algebras $C^*_0(z,z^*)$ from Theorem \ref{KCzz} also as algebras of 
continuous functions vanishing at infinity on a quantum complex plane, 
then, by \cite[Corollary~2.4]{CW}, all the abelian groups $\Z^n$, $n\in \N$, as well as $\underset{n\in\N}{\oplus}\,\Z$ 
can occur as a $K_0$-group 
of a quantum complex plane.   
\end{rem}

\subsection{Bott projections and Powers--Rieffel type projections}  
\label{BPRproj}

Our interest in the Bott projections lies in the observation that they can be viewed as representing 
noncommutative complex line bundles of any winding number. 
These projections are given by $2\times 2$-matrices whose entries are rational functions 
in the generators $z$ and $z^*$.  
Taking advantage of the noncommutativity of the involved crossed product algebras, 
we can also find non-trivial 1-dimensional projections belonging to the C*-algebra. 
The defining formulas for the 1-dimensional projections are 
completely analogous to the so-called Powers-Rieffel projections for the
irrational rotation C*-algebra (noncommutative torus) $C(\mathrm{S}^1)\rtimes \Z$. 
The Bott projections and Powers--Rieffel type projections exist for all 
C*-algebras $C^*_0(z,z^*)$ generated by a $q$-normal operator $z$, 
but for $C_0(\C_q)$ they take on an added significance because they are 
used to express all $K_0$-classes of  $C_0(\C_q)$. This will be shown in the next section 
by computing the index pairing. As can be seen in Theorem \ref{KCzz}, there are much more elementary 
projections in $C^*_0(z,z^*)$ if $\spec(|z|)\neq [0,\infty)$.  The relations between the 
Bott projections, Powers--Rieffel type projections and the elementary projections will be revealed 
in the next section by computing the index pairing. 

By classical Bott projections, we mean the following projections 
representing line bundles of winding number $\pm n\in\Z$ over the classical 
2-sphere \cite[Section 2.6]{GFV}: 
\begin{align*}
   p_{n} &:=  \mbox{$\frac{1}{1+ {z}^n \bar{z}^{n}}$}  \left(
\begin{array}{cc}
 \bar{z}^n {z}^{n}   &  \bar{z}^{n}\\[8pt]
 {z}^n  & 1
\end{array}\hsp\right)
 =   \mbox{$\frac{1}{1+ {z}^n \bar{z}^{n}}$} \left(
\begin{array}{c}
 \bar{z}^n \\  1
\end{array}\right) \left(\begin{array}{cc}  {z}^{n}  & \!\! 1 \end{array}\right) \hsp , \\[2pt]
   p_{-n} &:= \mbox{$\frac{1}{1+\bar{z}^n {z}^{n}}$}  \left(
\begin{array}{cc}
 {z}^{n} \bar{z}^n   &  {z}^{n}\\[8pt]
 \bar{z}^n  &  1
\end{array}\hsp\right)
 =  \mbox{$\frac{1}{1+\bar{z}^n {z}^{n}}$}   \left(
\begin{array}{c}
{z}^n  \\  1
\end{array}\right) \left(\begin{array}{cc} \bar{z}^{n} &\!\! 1\end{array}\right) 
\hsp ,\qquad n \in \N_0. 
 \end{align*}
Setting $v_n:= \mbox{$\frac{1}{\sqrt{1+ {z}^n \bar{z}^{n}}}$}   \left(\begin{array}{cc}  {z}^{n} & 1\end{array}\right)$ and 
$v_{-n}:= \mbox{$\frac{1}{\sqrt{1+ \bar{z}^n {z}^{n}}}$}    \left(\begin{array}{cc}  \bar {z}^{n} & 1\end{array}\right)$ for $n\in\N_0$,  
we can write  $p_k = v_k^* v_k$ for all $k\in\Z$. From the simple observation that $v_k v^*_k = 1$, 
it follows that $v_k$ is a partial isometry and thus 
$p_k$ is a projection. 

We apply the same ideas to define Bott projections in the non-commutative case, 
the only difference being the replacement of the unbounded continuous function $z\in C(\C)$ 
by the unbounded $q$-normal operator $z: \dom(z)\subset \hH\ra \hH$. For $n\in\N_0$, let 
\[
V_n:= \mbox{$\frac{1}{\sqrt{1+ {z}^n {z}^{*n}}}$}   \left(\begin{array}{cc}  {z}^{n} & 1\end{array}\right), \quad 
V_{-n}:= \mbox{$\frac{1}{\sqrt{1+ {z}^{*n} {z}^{n}}}$}    \left(\begin{array}{cc} {z}^{*n} & 1 \end{array}\right),\quad 
P_{\pm n} := V_{\pm n}^* V_{\pm n}. 
\]
Clearly, $V_{\pm n} \hs V_{\pm n}^* =1$, hence  $P_{\pm n} $ is a self-adjoint projection. 
Writing $z$ in its polar decomposition $z=U\hs |z|$ and using 
$U \hs f(|z|) \hs U^* = f(q|z|) =\alpha_q(f)(|z|)$ 
for every Borel function $f$ on $[0,\infty)$ \cite[Proposition 1]{CSS}, one computes 
\[ \label{BP}
   P_{n} = \left(
\begin{array}{ll}
\mbox{$\frac{q^{-n(n-1)}|z|^{2n}}{1+q^{-n(n-1)}\,|z|^{2n}}$} 
&\mbox{$\frac{q^{-\frac{n}{2}(n-1)}|z|^{n}}{1+q^{-n(n-1)}\,|z|^{2n}}$} \, U^{*n} \\[8pt]
\mbox{$\frac{q^{\frac{n}{2}(n+1)}\hs  |z|^n }{1+q^{n(n+1)}|z|^{2n}}$} \, U^{n} 
& \mbox{$\frac{1}{1+q^{n(n+1)}|z|^{2n}}$} 
\end{array}\hsp\right)\hsp ,
 \]
 \[ \label{BPN}
   P_{-n} = \left(
\begin{array}{ll} \mbox{$\frac{q^{n(n+1)}|z|^{2n}}{1+q^{n(n+1)}|z|^{2n}}$} 
&\mbox{$\frac{q^{\frac{n}{2}(n+1)}\hs  |z|^n}{1+q^{n(n+1)}|z|^{2n}}$} \, U^{n} \\[8pt]
 \mbox{$\frac{q^{-\frac{n}{2}(n-1)}\hs  |z|^n}{1+q^{-n(n-1)}|z|^{2n}}$}\, U^{*n}
& \mbox{$\frac{1}{1+q^{-n(n-1)}|z|^{2n}}$} 
\end{array}\hsp \right)\hsp . 
 \] 
By identifying the rational functions in $|z|$ with continuous functions on $\spec(|z|)$, 
we can view $P_n$ and $P_{-n}$ as 
projections in $\Mat_2(C^*_0(z,z^*) \dotplus \C)$. 
Furthermore, they present $K_0$-classes 
$[P_n] - [1], [ P_{-n}] - [1] \in K_0(C^*_0(z,z^*))$ and 
$[P_n], \,[ P_{-n}] \in K_0(C^*_0(z,z^*) \dotplus \C)$. 

Now we will construct 1-dimensional projections in $C^*_0(z,z^*)$ 
similar to the Powers--Rieffel projections in the irrational rotation C*-algebra \cite{R}.  
To this end, choose a continuous function 
$$
\phi : [q,1] \lra \R \quad \text{such \ that}  \quad   0\leq \phi \leq 1,\quad    \phi(q)= 0,\quad \phi(1)= 1, 
$$
and define for $n\in \N$ 
\begin{align}\nonumber &
h(t)\hsp :=\hsp  \left\{ 
\begin{array}{cl}
\!\!\sqrt{\phi(t)(1\hsp-\hsp\phi(t))},   & t\in[q,1], \\ 
  0, &  t\notin [q,1], 
\end{array} \right. \ \ 
f_n(t) \hsp:=\hsp   \left\{ \begin{array}{cl}
\phi(t) , & t\in[q,1], \\ 
1, & t \in (1,q^{-n+1}), \\
\!\!1\hsp- \hsp\phi(q^n t), & t\in [q^{-n+1} , q^{-n}],\\
0, &   t \notin [q,q^{-n}]. 
\end{array} \right. 
& \\[-14pt]  & &  \label{hfn}
\end{align}  
With $U$ denoting the unitary element from $ C_0(X)\rtimes \Z$ implementing the $\Z$-action, let 
\[ \label{PRP} 
R_n:= U^n\hs h + f_n + h\hs  U^{*n},\qquad n \in\N. 
\] 
Since $h,\,f_n\in C_0(X)$ and $h(0) = 0$, we have $R_n\in C^*_0(z,z^*)$. 
Obviously, $R_n^*=R_n$. Furthermore, direct computations show that $R_n^2=R_n$. 
Hence $[R_n]\in K_0(C^*_0(z,z^*))$ defines a $K_0$-class, and one can write $-[R_n] = [1-R_n] - [1]$  
with the 1-di\-men\-sional projection $1-R_n \in C^*_0(z,z^*)\dotplus \C$.

\section{K-homology and index pairings} 

\subsection{Fredholm modules} 

Even and odd Fredholm modules for a C*-algebra  $\A$ 
define equivalence classes in the $K$-homo\-logy groups 
$K^0(\A)$ and $K^1(\A)$, respectively. 
As we are interested in the index pairing of Fredholm modules with K-theory, 
and as it has been shown that the $K_1$-groups of C*-algebras generated by a 
q-normal operator are trivial, we will only consider even Fredholm modules which pair with 
$K_0$-classes. 

For our purposes, it suffices to describe an even Fredholm module as 
a pair of bounded *-representations $\pi_-$ and $\pi_+$  of $\A$ on a Hilbert space, say $\hH$, such that 
$\pi_+(a) - \pi_-(a)\in  \rmK(\hH)$ for all $a\hsp \in\hsp \A$. Its class in $K^0(\A)$ will be denoted by $[(\pi_-,\pi_+)]$. 
Let $p\in \Mat_N(\A)$ be a self-adjoint projection. Then 
$\hs\pi_+(p) \hs :\hs \pi_-(p)\hH^N \lra \pi_+(p)\hH^N$
is a Fred\-holm operator and the index map 
\[ \label{inding} 
\ip{[(\pi_-,\pi_+)]}{[p]} := \ind(\pi_+(p)\!\!\upharpoonright_{\pi_-(p)\hH^N})  
\]
defines a pairing between $K^0(\A)$ and $K_0(\A)$.  
Provided that $\pi_-(p) - \pi_+(p)$ is of trace class, 
the index pairing can be computed by 
the trace formula 
\[    
\ip{[(\pi_-,\hsp\pi_+)]}{\hsp[p]} = \Tr_{\hH} \big(\Tr_{\Mat_N(\A)} ( \pi_-(p) -  \pi_+(p) ) \big),         \label{IP}
\]
see e.g.\ \cite{C} or \cite{GFV}.  

For the index pairing \eqref{inding} to be well-defined, 
one does not need to assume that $\pi_+$ and $\pi_-$ are unital representations. 
In particular, consider the *-ho\-mo\-mor\-phisms  $\ev_\infty,\,\ev_0: C^*_0(z,z^*)\dotplus \C\lra \C$, where 
$\ev_\infty$ was defined in \eqref{evty} and $\ev_0$ denotes the extension of 
the homomorphism \eqref{ev0} to  $C^*_0(z,z^*)\dotplus \C$. Setting $\hH_- = \hH_+:=\C$, 
the  pairs $(\ev_\infty, 0)$ and $(\ev_0, 0)$ yield trivially Fredholm modules for $C^*_0(z,z^*)\dotplus \C$ 
and the trace formula \eqref{IP} reads 
\begin{align} \label{IP0} 
\ip{[(\ev_\infty,0)]}{\hsp[p]} = \Tr_{\Mat_N(\C)}\big(\ev_\infty(p)\big) , \qquad  
\ip{[(\ev_0,0)]}{\hsp[p]} = \Tr_{\Mat_N(\C)}\big(\ev_0(p)\big). 
\end{align}
Note that \eqref{IP0} computes the rank of the projections  $\ev_\infty(p), \,\ev_0(p)\in \Mat_N(\C)$. 
As $\C \cong (C^*_0(z,z^*)\dotplus \C) \hs / \hs C^*_0(z,z^*)$ corresponds to evaluating functions at the classical point $\infty$, 
we can view  the number $\Tr_{\Mat_N(\C)}\big(\ev_\infty(p)\big)$ as the rank of 
the noncommutative vector bundle determined by $p\in \Mat_N(C^*_0(z,z^*)\dotplus \C)$ in the spirit of the Serre--Swan theorem. 

Less trivially, the next proposition associates  a Fredholm module 
to any irreducible *-representation from \eqref{zh}.

\begin{prop} \label{P1}
For any $q$-normal operator $z$ and real number 
$y\in (q,1]\cap \spec(|z|)$, consider the Hilbert space representation $\pi_y : C^*_0(z,z^*)\dotplus \C \lra \rmB(\lZ)$
given on an orthonormal basis of $\lZ$ by 
\[  \label{repy}
\pi_y(U) \hs e_n = e_{n-1} ,\qquad \pi_y(f) e_n = f(q^n\hs y) \hs  e_n, \quad f\in C(\spec(|z|)\cup\{\infty\}). 
\]
Furthermore, let $\Pi_+$ and $\Pi_-$ denote the orthogonal projections onto 
the closed subspaces 
$\overline{\lin}\{e_k : k> 0\}$ and  \,$\overline{\lin}\{e_k : k\leq 0\}$ of $\lZ$, respectively, 
and define 
Hilbert space representations $\pi_0 ,\,\pi_\infty  : C^*_0(z,z^*)\dotplus \C \lra \rmB(\lZ)$ by 
\[ \label{repe}
\pi_0( a ) := \ev_0(a) \hs  \Pi_+ , \qquad  \pi_\infty( a ) := \ev_\infty(a) \hs  \Pi_-, \qquad a\in  C^*_0(z,z^*)\dotplus \C, 
\]
with the *-homomorphisms $\ev_0, \,\ev_\infty : C^*_0(z,z^*)\dotplus \C \lra \C$ defined 
at the end of Section~\ref{sec-qn}. 
Then the pair $(\pi_y, \pi_0\oplus\pi_\infty)$ is an even Fredholm module for 
$C^*_0(z,z^*)\dotplus \C$. 
\end{prop}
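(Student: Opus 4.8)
The plan is to verify the two conditions in the definition of an even Fredholm module recalled above: that $\pi_y$ and $\pi_0\oplus\pi_\infty$ are bounded *-representations of $C^*_0(z,z^*)\dotplus\C$ on $\lZ$, and that $(\pi_0\oplus\pi_\infty)(a)-\pi_y(a)\in\rmK(\lZ)$ for every $a$.

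For the representations, $\pi_y$ is the extension to the unitization of the irreducible representation attached in \eqref{zh} to $\hH_0=\C$ and $A=y$; being one of the representations classified in \cite{CW} it is a bounded *-representation, as one also checks directly from the facts that $\pi_y(U)$ is the unitary bilateral shift, $\pi_y(f)$ is the bounded diagonal operator $e_n\mapsto f(q^ny)\hs e_n$, and $\pi_y(U)\hs\pi_y(f)\hs\pi_y(U)^*=\pi_y(\alpha_q(f))$ for $\alpha_q$ as in \eqref{alpha}. Since $\Pi_+$ and $\Pi_-$ are complementary orthogonal projections and $\ev_0,\ev_\infty:C^*_0(z,z^*)\dotplus\C\to\C$ are *-homomorphisms, the map $a\mapsto\ev_0(a)\hs\Pi_++\ev_\infty(a)\hs\Pi_-$, which is precisely $\pi_0\oplus\pi_\infty$, is a bounded *-representation.

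The core of the argument is the compactness of the difference, and here I would first reduce to generators: the set $\cC:=\{a:(\pi_0\oplus\pi_\infty)(a)-\pi_y(a)\in\rmK(\lZ)\}$ is a norm-closed linear subspace because $\rmK(\lZ)$ is norm closed and both representations are contractive, so by the description \eqref{Czz1} of $C^*_0(z,z^*)\dotplus\C$ it suffices to show that $\cC$ contains the elements $f\in C(X\cup\{\infty\})$ and $fU^k$ with $k\neq0$ and $f(0)=f(\infty)=0$. For $f\in C(X\cup\{\infty\})$ the difference is the diagonal operator sending $e_n$ to $(f(0)-f(q^ny))\hs e_n$ for $n>0$ and to $(f(\infty)-f(q^ny))\hs e_n$ for $n\leq0$; since $q^ny\to0$ as $n\to+\infty$ and $q^ny\to\infty$ as $n\to-\infty$, continuity of $f$ forces these diagonal entries to vanish at $\pm\infty$, and the operator is compact. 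For $fU^k$ with $k\neq0$ and $f(0)=f(\infty)=0$ one has $\ev_0(fU^k)=f(0)=0$ and $\ev_\infty(fU^k)=0$ (the latter because $\ev_\infty$ reads off only the $U^0$-coefficient), so $(\pi_0\oplus\pi_\infty)(fU^k)=0$, whereas $\pi_y(fU^k)$ is the weighted shift $e_n\mapsto f(q^{n-k}y)\hs e_{n-k}$ whose weights tend to $0$ as $|n|\to\infty$; such an operator is a norm limit of finite-rank truncations and hence compact.

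The verification is essentially routine, and I do not expect a genuine obstacle. The only point deserving care is the bookkeeping at the two ends of $\Z$: the direction $n\to+\infty$ corresponds to evaluation at $0$, matched by $\pi_0$ on $\Pi_+\lZ$, and $n\to-\infty$ corresponds to evaluation at $\infty$, matched by $\pi_\infty$ on $\Pi_-\lZ$. This matching, encoded in the splitting of $\Pi_\pm$ at the index $0$, is exactly what makes both the diagonal and the weighted-shift differences compact.
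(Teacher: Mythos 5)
Your proposal is correct and follows essentially the same route as the paper: both reduce by density and linearity to the generators $f$ and $fU^k$, treat the case $k=0$ via a diagonal operator whose entries vanish as $n\to\pm\infty$ (using $q^ny\to 0$ and $q^ny\to\infty$), and treat $k\neq 0$ by noting $(\pi_0\oplus\pi_\infty)(fU^k)=0$ while $\pi_y(f)$ is compact. The extra detail you supply on the *-representation property and the norm-closedness of the set of elements with compact difference is consistent with what the paper leaves as ``direct computation.''
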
 
\begin{proof}
Direct computation show that \eqref{repy} and \eqref{repe} define unital *-representations. By density and continuity, 
it suffices to show that $\pi_y(fU^n )- \pi_0(fU^n)  - \pi_\infty (fU^n)$ is compact for the 
generators $fU^n\in C^*_0(z,z^*)\dotplus \C$, $n\in\Z$. First, let $n\neq 0$. Then $f(0)=f(\infty) =0$ by \eqref{Czz1} and therefore 
$\pi_0(fU^n)  + \pi_\infty (fU^n) =0$ by  \eqref{repe}. 
The operator $\pi_y(f)$ is diagonal on the basis elements $e_k$ with  eigenvalues $f(q^k y)$ converging 
to $f(0)= f(\infty)=0$ as $k \ra  \pm \infty$. Therefore $\pi_y(f) $ is a compact operator and so is 
 $\pi_y(fU^n )- \pi_0(fU^n)  - \pi_\infty (fU^n) = \pi_y(f)\hs \pi_y(U^n)$. 

Next, let $n=0$. Then $\big(\pi_y(f )- \pi_0(f)  - \pi_\infty (f) \big)e_k = ( f(q^k y) - f(0) )\hs  e_k$ if $k>0$ and 
 $\big(\pi_y(f )- \pi_0(f)  - \pi_\infty (f) \big)e_k = ( f(q^k y) - f(\infty) )\hs  e_k$ if $k\leq 0$. Again, $\{ e_k:k\in\Z\}$ 
is a complete set of eigenvectors and the sequence of eigenvalues converges to $0$ since 
 $\underset{{k\ra \infty}}{\lim}  ( f(q^k y)\hsp  - \hsp f(0) ) \hsp = \hsp f(0)\hsp -\hsp  f(0)\hsp  =\hsp  0$ 
 and $\underset{k\ra -\infty}{\lim}( f(q^k y)\hsp  -\hsp  f(\infty) ) \hsp =\hsp  f(\infty)\hsp -\hsp  f(\infty)\hsp  =\hsp  0$. 
 Therefore $\pi_y(f )- \pi_0(f)  - \pi_\infty (f) $ yields again a compact operator. 
\end{proof} 

Recall that the equivalence relation in K-homology is defined by operator homotopy. 
If $y_1, \,y_2 \in  (q,1]\cap \spec(|z|)$ belong to the same connected component of  $\spec(|z|)\subset \R$, 
then $[0,1]\ni t\ra (\pi_{y_1 + t(y_2-y_1)} , \pi_0\oplus\pi_\infty)$ yields a operator homotopy between 
 $(\pi_{y_1}, \pi_0\oplus\pi_\infty)$ and  $(\pi_{y_2}, \pi_0\oplus\pi_\infty)$, hence these Fredholm modules 
 define the same class in $K^0(C^*_0(z,z^*)\dotplus \C)$. 
The non-degenerateness of the index pairing in Theorem \ref{Teo} will show that, if 
$y_1, \,y_2 \in  (q,1]\cap \spec(|z|)$ belong to different connected components, then the corresponding 
Fredholm modules  yield different K-homology classes.

\subsection{Index pairings for the quantum complex plane}  

The aim of this section is to compute the index pairing for the unital C*-algebra 
$C(\mathrm{S}^2_q)= C_0(\C_q)\dotplus \C$ viewed as the algebra of continuous functions on 
the one-point  compactification of the quantum complex plane. A family of projections describing $K_0$-classes 
was given in Section \ref{BPRproj}. However, since the computations apply to any $q$-normal operator, 
we state the result for a general C*-algebra $C^*_0(z,z^*)\dotplus \C$. 
Since, by Theorem \ref{T1},  $K_0(C(\mathrm{S}^2_q))\cong \Z\oplus\Z$ is torsion free, 
it follows from the universal coefficient theorem by Rosenberg and Schochet \cite{rs87}  that 
$K^0(C(\mathrm{S}^2_q))\cong \Z\oplus\Z$. Therefore it suffices to compute the index pairing 
for two generators of $K^0(C(\mathrm{S}^2_q))$. It turns out that one is given by Proposition \ref{P1}, 
and the other may be taken as $[(\ev_\infty,0)]$ from \eqref{IP0}. 
\begin{thm} \label{TIP} 
Let $z$ be a $q$-normal operator and $n\in \N$.  For $y\in (q,1]\cap \spec(|z|)$, 
consider the K-homology class 
$[(\pi_y, \pi_0\oplus\pi_\infty)]$ from Proposition \ref{P1}, 
and let $[(\ev_\infty,0)]$ denote the K-homology class  from Equation \eqref{IP0}. 
Then the index pairing between these 
K-ho\-mo\-logy classes 
and the K-theory classes of \,$C^*_0(z,z^*)\dotplus\hs \C$  
defined by the Bott projections $P_{\pm n}$ from Equations \eqref{BP} and \eqref{BPN}, 
and the Powers--Rieffel projections $R_n$ from Equation \eqref{PRP},  
is given by 
\begin{align*}
&\ip{[(\ev_\infty,0)]}{\hsp[P_{\pm n}]} = 1, & & \ip{[(\pi_y, \pi_0\oplus\pi_\infty)]}{\hsp[P_{\pm n}]} = \pm n, \\
 &\ip{[(\ev_\infty,0)]}{\hsp[R_n]} = 0,
& & \ip{[(\pi_y, \pi_0\oplus\pi_\infty)]}{\hsp[R_n]} = n.  
\end{align*}
Moreover, $\ip{[(\ev_\infty,0)]}{\hsp[1]} = 1$ and  $\ip{[(\pi_y, \pi_0\oplus\pi_\infty)]}{\hsp[1]} = 0$. 
\end{thm}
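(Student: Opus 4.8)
The plan is to split the computation according to the two families of K-homology classes. The pairings with $[(\ev_\infty,0)]$ are handled directly by the trace formula \eqref{IP0}, which reduces each one to the rank of a scalar matrix. Since $\ev_\infty$ kills every monomial $f_kU^k$ with $k\neq 0$ and returns $f_0(\infty)$ on the $U^0$-term, the off-diagonal entries of \eqref{BP} and \eqref{BPN} vanish while the diagonal entries tend to $1$ and $0$ as $|z|\to\infty$; hence $\ev_\infty(P_{\pm n})=\diag(1,0)$ has rank $1$. Likewise $\ev_\infty(R_n)=f_n(\infty)=0$ because $\supp f_n\subset[q,q^{-n}]$, and $\ev_\infty(1)=1$. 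This settles every pairing with $[(\ev_\infty,0)]$, including $\ip{[(\ev_\infty,0)]}{[1]}=1$.

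For the classes $[(\pi_y,\pi_0\oplus\pi_\infty)]$ I would use the trace formula \eqref{IP}, so the first task is to write $\pi_y(p)$ and $(\pi_0\oplus\pi_\infty)(p)=\ev_0(p)\Pi_++\ev_\infty(p)\Pi_-$ explicitly and check that their difference is trace class. Using $\pi_y(f)e_k=f(q^ky)e_k$ and $\pi_y(U^{\pm n})e_k=e_{k\mp n}$, the operator $\pi_y(p)$ is a band operator whose off-diagonal bands shift the index by $\pm n$ and therefore do not contribute to the trace; only the diagonal matters. For the Bott projections a short evaluation gives $(\pi_0\oplus\pi_\infty)(P_{\pm n})=\diag(\Pi_-,\Pi_+)$, for $R_n$ one gets $0$ (both $\ev_0(R_n)$ and $\ev_\infty(R_n)$ vanish), and for $1$ the difference is $\id-\id=0$, giving the pairing $0$.

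The two nontrivial evaluations are then reduced to scalar sums. For $R_n$ the operator $\pi_y(R_n)$ is finite rank, since $f_n$ and $h$ are compactly supported, so the pairing equals $\sum_k f_n(q^ky)$; for $y\in(q,1)$ exactly the $n+1$ points $q^ky$ with $k=-n,\dots,0$ meet $\supp f_n$, and the two $\phi$-dependent endpoint values $\phi(y)$ and $1-\phi(y)$ add to $1$ while the remaining $n-1$ points contribute $1$ each, for a total of $n$. For $P_n$, writing $x_k:=q^ky$ and using $\chi_{\{k\le 0\}}+\chi_{\{k>0\}}=1$, the two diagonal blocks of the difference combine into $\sum_{k\in\Z}\big(a(x_k)+d(x_k)-1\big)$ with $a,d$ the diagonal entries of \eqref{BP}. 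The crux is the algebraic identity $a(x_k)+d(x_k)-1=G(k+n)-G(k)$ for the single function $G(k):=\big(1+q^{2nk-n(n-1)}y^{2n}\big)^{-1}$, which comes down to the exponent bookkeeping $n(n+1)-\big(-n(n-1)\big)=2n^2$, i.e.\ a shift of the index by $n$ scaled against the $x_k^{2n}$ in the denominators. Telescoping over $\Z$ with $G(k)\to 1$ as $k\to+\infty$ and $G(k)\to 0$ as $k\to-\infty$ yields $n$; the projection $P_{-n}$ produces $G(k)-G(k+n)$ and hence $-n$.

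The step I expect to be the main obstacle is this last one: spotting that the unwieldy combination $a+d-1$ for the $q$-deformed Bott projections is an exact forward difference of one function, and then rigorously passing from the bi-infinite sum to the telescoped boundary terms. Both are controlled by the geometric behaviour $x_k=q^ky$, which forces $a(x_k)-\chi_{\{k\le 0\}}$, $d(x_k)-\chi_{\{k>0\}}$ and the off-diagonal entries $b(x_{k\pm n})$ to decay exponentially in $|k|$; this decay simultaneously establishes that the difference $\pi_y(P_{\pm n})-(\pi_0\oplus\pi_\infty)(P_{\pm n})$ is trace class and that the rearrangement in the telescoping is legitimate.
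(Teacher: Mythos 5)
Your proposal is correct and follows essentially the same route as the paper: the pairings with $[(\ev_\infty,0)]$ via the rank formula \eqref{IP0}, the trace formula \eqref{IP} reducing to $\sum_k f_n(q^ky)=n$ for $R_n$, and the diagonal sum $\sum_{k\in\Z}\bigl(a(x_k)+d(x_k)-1\bigr)$ for $P_{\pm n}$. Your telescoping identity $a(x_k)+d(x_k)-1=G(k+n)-G(k)$ with $G(k)=(1+q^{2nk-n(n-1)}y^{2n})^{-1}$ is a tidy repackaging of the paper's index-shift manipulations \eqref{sum1}--\eqref{sum2} (which are the same telescoping in disguise), and your exponent check $n(n+1)+n(n-1)=2n^2$ and the boundary limits $G(\pm\infty)=1,0$ are all verified correctly.
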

\begin{proof}
By setting $|z|=0$ 
and taking the limit $|z|\ra \infty$,  
one sees that the application of the evaluation maps \eqref{ev0} and \eqref{evty}  to the projections 
from  \eqref{BP}, \eqref{BPN} and \eqref{PRP} 
yields 
\begin{align}     \label{evP} 
&\ev_0(P_{\pm n}) = \begin{pmatrix}  0 & 0\\ 0 & 1  \end{pmatrix}, & &
\ev_\infty(P_{\pm n}) = \begin{pmatrix}  1& 0\\ 0 & 0  \end{pmatrix},\\
&\ev_0(R_n) = f_n(0)= 0, & & 
\ev_\infty(R_n) = f_n(\infty)= 0,   \label{evR}
\end{align} 
and also $\ev_0(1)=1=\ev_\infty(1)$.   In particular, by \eqref{IP0}, 
$$
\ip{[(\ev_\infty,0)]}{\hsp[P_{\pm n}]} = 1 , \qquad \ip{[(\ev_\infty,0)]}{\hsp[R_n]} = 0,\qquad  \ip{[(\ev_\infty,0)]}{\hsp[1]} = 1. 
$$
Furthermore, $\ip{[(\pi_y, \pi_0\oplus\pi_\infty)]}{\hsp[1]} =0$ 
by \eqref{IP} since $\pi_y(1) - (\pi_0\oplus\pi_\infty)(1)= 1-1=0$. 

We continue by computing the pairing $ \ip{[(\pi_y, \pi_0\oplus\pi_\infty)]}{\hsp[R_n]} $. 
From \eqref{repe} and \eqref{evR}, it follows that $(\pi_0\oplus\pi_\infty)(R_n)=0$.  
Therefore \eqref{IP} reduces to 
\[     	\label{traceR} 
\ip{[(\pi_y, \pi_0\oplus\pi_\infty)]}{\hsp[R_n]} = \Tr_{\lZ}  \big( \pi_y(U^n\hs h + f_n + h\hs  U^{*n} )\big) . 
\] 
As $\pi_y(U^n\hs h )e_k =  h(q^ky)e_{k-n}$ acts as a weighted shift operator, the trace of $\pi_y(U^n\hs h )$ vanishes, 
and so does the trace of its adjoint $\pi_y(h\hs U^{*n} ) = \pi_y((U^n h)^{*} )$. 
Therefore, computing the trace in \eqref{traceR} reduces to summing the matrix elements $\ip{e_k}{\pi_y(f_n)\hs e_k}$, 
$k\in\Z$. 
Applying \eqref{hfn} and \eqref{repy}, we get 
$$
\ip{[(\pi_y, \pi_0\oplus\pi_\infty)]}{\hsp[R_n]} = \Tr_{\lZ}  \big( \pi_y( f_n )\big) 
=  \phi(y) +\Big(\,\overset{n-1}{\underset{k=1}{\mbox{$\sum$}}}\hs  1 \,\Big) + 1- \phi(y) =n. 
$$

It remains to compute $ \ip{[(\pi_y, \pi_0\oplus\pi_\infty)]}{\hsp[P_{\pm n}]} $. 
From  \eqref{repe} and \eqref{evP}, it follows that 
$\Tr_{\Mat_2(C^*_0(z,z^*)\dotplus \C)}\big((\pi_0\oplus\pi_\infty )(P_{\pm n}) \big) = \Pi_+ + \Pi_- =1$. 
Thus,  for $[P_n]$ from \eqref{BP}, 
the trace formula \eqref{IP}  and the Hilbert space representation \eqref{repy} give 
\begin{align}   \label{ipPn} 
&\ip{[(\pi_y, \pi_0\oplus\pi_\infty)]}{\hsp[P_{n}]} 
= \Tr_{\lZ}\big( \pi_y(\mbox{$\frac{q^{-n(n-1)}|z|^{2n}}{1+q^{-n(n-1)}\,|z|^{2n}}$}  
+ \mbox{$\frac{1}{1+q^{n(n+1)}|z|^{2n}}$})  -1  \big) \\
&\quad  = \msum{k\in\Z}{} \Big( \mbox{$\frac{q^{-n(n-1)}(q^k\hs y)^{2n}}{1+q^{-n(n-1)}\,(q^k\hs y)^{2n}}$}   
+ \mbox{$\frac{1}{1+q^{n(n+1)}(q^{k}\hs y)^{2n}}$}  -1 \Big) 
\nonumber \\
&\quad = \msum{k=0}{\infty} 
\Big(\mbox{$\frac{q^{-n^2+n+2nk}\hs y^{2n}}{1+q^{-n^2+n+2nk}\hs y^{2n}}$}  
+ \big( \mbox{$\frac{1}{1+q^{n^2+n+2nk}\hs y^{2n}}$} -1\big)\Big) \nonumber\\
&\quad  \hspace{30pt} + \msum{k=1}{\infty}  
\Big(\big(\mbox{$\frac{q^{-n^2+n-2nk}\hs y^{2n}}{1+q^{-n^2+n-2nk}\hs y^{2n}}$} -1\big) 
+ \mbox{$\frac{1}{1+q^{n^2+n-2nk}\hs y^{2n}}$}  \Big) 
\nonumber\\
&\quad = \msum{k=0}{\infty} 
\Big(\mbox{$\frac{q^{-n^2+n+2nk}\hs y^{2n}}{1+q^{-n^2+n+2nk}\hs y^{2n}}$}  
- \mbox{$\frac{q^{n^2+n+2nk}\hs y^{2n}}{1+q^{n^2+n+2nk}\hs y^{2n}}$} \Big) 
+ \msum{k=1}{\infty}  
\Big(\mbox{$\frac{-1}{1+q^{-n^2+n-2nk}\hs y^{2n}}$}  
+ \mbox{$\frac{1}{1+q^{n^2+n-2nk}\hs y^{2n}}$}  \Big).  \nonumber
\end{align} 
Observe that 
\[  \label{sum1}
\msum{k=0}{\infty} \mbox{$\frac{q^{-n^2+n+2nk}\hs y^{2n}}{1+q^{-n^2+n+2nk}\hs y^{2n}}$}  
= 
\msum{k=0}{n-1} \mbox{$\frac{q^{-n^2+n+2nk}\hs y^{2n}}{1+q^{-n^2+n+2nk}\hs y^{2n}}$}  
+\msum{k=0}{\infty} \mbox{$\frac{q^{n^2+n+2nk}\hs y^{2n}}{1+q^{n^2+n+2nk}\hs y^{2n}}$}\, ,
\] 
where the second sum was obtained by shifting the summation index from $k$ to $n+k$. 
Similarly, 
\[
 \msum{k=1}{\infty} \mbox{$\frac{1}{1+q^{n^2+n-2nk}\hs y^{2n}}$} 
   = \msum{k=0}{n-1} \mbox{$\frac{1}{1+q^{-n^2+n+2nk}\hs y^{2n}}$}  +
   \msum{k=1}{\infty} \mbox{$\frac{1}{1+q^{-n^2+n-2nk}\hs y^{2n}}$}      \label{sum2}
\] 
by shifting the summation index from $k$ to $n-k$. 
Inserting \eqref{sum1} and \eqref{sum2} into \eqref{ipPn} yields 
\[ \label{n} 
\ip{[(\pi_y, \pi_0\oplus\pi_\infty)]}{\hsp[P_{n}]} 
= \msum{k=0}{n-1}\Big( \mbox{$\frac{q^{-n^2+n+2nk}\hs y^{2n}}{1+q^{-n^2+n+2nk}\hs y^{2n}}$}  
 + \mbox{$\frac{1}{1+q^{-n^2+n+2nk}\hs y^{2n}}$}  \Big) =  \msum{k=0}{n-1} \, 1 = n. 
\] 
Analogously, 
\begin{align*} 
&\ip{[(\pi_y, \pi_0\oplus\pi_\infty)]}{\hsp[P_{-n}]} 
= \Tr_{\lZ}\big( \pi_y(\mbox{$\frac{q^{n(n+1)}|z|^{2n}}{1+q^{n(n+1)}\,|z|^{2n}}$}  + \mbox{$\frac{1}{1+q^{-n(n-1)}|z|^{2n}}$})  -1  \big) \\ 
& = \msum{k\in\Z}{} \Big( \mbox{$\frac{q^{n(n+1)}(q^k\hs y)^{2n}}{1+q^{n(n+1)}\,(q^k\hs y)^{2n}}$}   
+ \mbox{$\frac{1}{1+q^{-n(n-1)}(q^{k}\hs y)^{2n}}$}  -1 \Big) \\
&= \msum{k=0}{\infty} 
\Big(
\mbox{$\frac{q^{n^2+n+2nk}\hs y^{2n}}{1+q^{n^2+n+2nk}\hs y^{2n}}$}
- \mbox{$\frac{q^{-n^2+n+2nk}\hs y^{2n}}{1+q^{-n^2+n+2nk}\hs y^{2n}}$}  
 \Big) 
+ \msum{k=1}{\infty}  
\Big(
 \mbox{$\frac{- 1}{1+q^{n^2+n-2nk}\hs y^{2n}}$}
 + \mbox{$\frac{1}{1+q^{-n^2+n-2nk}\hs y^{2n}}$}  
 \Big) \\
 & = - \ip{[(\pi_y, \pi_0\oplus\pi_\infty)]}{\hsp[P_{n}]}  = -n, 
\end{align*}
where we used \eqref{ipPn} and \eqref{n} in the last line. 
\end{proof} 

As announced in Remark \ref{R2}, we will now give explicit generators for $K_0(C_0(\C_q))$ 
and $K_0(C(\mathrm{S}^2_q))$. 

\begin{cor} \label{C1} 
Each of the $K_0$-classes $[R_1]$, 
$[P_{1}] - [1]$ and  $[P_{-1}] - [1]$ generates $K_0(C_0(\C_q))\cong \Z$,   
and a pair of generators for  $K_0(C(\mathrm{S}^2_q))\cong \Z\oplus \Z$ is obtained by adding the trivial 
class $[1]\in K_0(C(\mathrm{S}^2_q))$. 
\end{cor}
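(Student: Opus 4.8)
The plan is to turn the index pairing of Theorem~\ref{TIP} into a group homomorphism that detects generators, and to combine this with $K_0(C_0(\C_q))\cong\Z$ from Theorem~\ref{T1}. Because the index pairing \eqref{inding} is additive in its K-theory argument (indices of Fredholm operators add under direct sums of projections), fixing the K-homology class of Proposition~\ref{P1} for any $y\in(q,1]$ (available since $\spec(|z|)=[0,\infty)$ for $C_0(\C_q)$) yields a group homomorphism
\[
\Phi:=\ip{[(\pi_y,\pi_0\oplus\pi_\infty)]}{\,\cdot\,}:K_0(C(\mathrm{S}^2_q))\lra\Z ,
\]
whose restriction to the subgroup $K_0(C_0(\C_q))$ is a homomorphism $\Z\to\Z$.

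First I would check that the three classes lie in $K_0(C_0(\C_q))=\ker(\ev_{\infty\ast})$, where $\ev_\infty$ is the quotient map of \eqref{CSq}--\eqref{evty}. Since $R_1$ is a genuine projection in $C_0(\C_q)$, the class $[R_1]$ lies there directly; for the Bott classes, \eqref{evP} gives $\ev_\infty(P_{\pm1})=\diag(1,0)$ of rank $1$, so $\ev_{\infty\ast}([P_{\pm1}]-[1])=0$. Evaluating $\Phi$ by Theorem~\ref{TIP} with $n=1$, and using $\ip{[(\pi_y,\pi_0\oplus\pi_\infty)]}{[1]}=0$, I obtain
\[
\Phi([R_1])=1,\qquad \Phi([P_1]-[1])=1,\qquad \Phi([P_{-1}]-[1])=-1 .
\]

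The decisive step is the arithmetic observation that a homomorphism $\Phi:\Z\to\Z$ sending an element to $\pm1$ must send it to a generator: writing $K_0(C_0(\C_q))=\Z\hs g$ and $x=m\hs g$, the relation $m\,\Phi(g)=\pm1$ in $\Z$ forces $m=\pm1$, hence $x=\pm g$. Applying this to each of $[R_1]$, $[P_1]-[1]$ and $[P_{-1}]-[1]$ shows that each one generates $K_0(C_0(\C_q))\cong\Z$; equivalently, the restricted $\Phi$ is an isomorphism onto $\Z$ carrying each class to a generator.

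For the unital algebra I would invoke the split exact sequence $0\to C_0(\C_q)\to C(\mathrm{S}^2_q)\xrightarrow{\ev_\infty}\C\to0$, split by $\lambda\mapsto\lambda\eins$, which induces the decomposition $K_0(C(\mathrm{S}^2_q))\cong K_0(C_0(\C_q))\oplus\Z\hs[1]$ consistent with Theorem~\ref{T1}. A generator of the first summand together with $[1]$ then generates the whole group, yielding the pairs $\{[R_1],[1]\}$, $\{[P_1]-[1],[1]\}$ and $\{[P_{-1}]-[1],[1]\}$. There is little genuine obstacle here, since all the analytic content already resides in Theorem~\ref{TIP}; the only points demanding care are confirming membership in $K_0(C_0(\C_q))$ and noting that a \emph{single} K-homology class from Proposition~\ref{P1} already suffices to certify a generator, so that the corollary reduces to bookkeeping in the split and six-term exact sequences.
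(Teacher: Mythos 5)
Your proposal is correct and takes essentially the same route as the paper: both arguments use the value $\pm 1$ of the index pairing from Theorem~\ref{TIP} against the Fredholm module of Proposition~\ref{P1} to conclude via the arithmetic of $\Z$ that each class generates $K_0(C_0(\C_q))\cong\Z$, and then adjoin the trivial class $[1]$ for $K_0(C(\mathrm{S}^2_q))$. The extra details you supply (membership of the classes in $\ker(\ev_{\infty\ast})$ and the splitting of the unitization sequence) are points the paper leaves implicit.
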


\begin{proof}
First we consider $K_0(C_0(\C_q))\cong \Z$. 
Since any multiple of a generator would yield a multiple of $1$ in the index pairing with K-homology classes, 
it is suffices to find an element in $K_0(C_0(\C_q))$ such that the pairing with a K-homology class yields~$\pm 1$. 
By Theorem \ref{TIP},  
$\ip{[(\pi_y, \pi_0\oplus\pi_\infty)]}{\hsp[R_1]} = 1$ and $\ip{[(\pi_y, \pi_0\oplus\pi_\infty)]\,}{[P_{\pm 1}]-[1]} = \pm 1$, 
therefore each of the three $K_0$-classes $[R_1]$, $[P_{1}] - [1]$ and $[P_{-1}] - [1]$ 
freely generates $K_0(C_0(\C_q))$. 
For a set of generators of $K_0(C(\mathrm{S}^2_q))$, one only needs to add the 
trivial class $[1]$. 
\end{proof} 

By analogy to the classical Bott projections, we may view the projective modules 
$C(\mathrm{S}^2_q)^2 P_n$ as the continuous sections of 
a non-commutative complex  line bundle over the quantum sphere $\mathrm{S}^2_q$ 
with winding number $n\in\Z$. Then $\ip{[(\pi_y, \pi_0\oplus\pi_\infty)]}{\hsp[P_{n}]} $ 
computes the winding number and the pairing with $[(\ev_\infty,0)]$ 
detects the rank of a noncommutative vector bundle (in the classical point $\infty$). 
Moreover, an isomorphism $\Z\cong K_0(C_0(\C_q))$ is given by 
$n \mapsto [P_n] - [1]$. 

As an application of the index pairing in Theorem \ref{TIP}, we will give an alternative description 
of non-commutative complex  line bundles by the 1-dimensional projections $R_n$, $n\in\N$, 
without the need of specifying equivalence relations in $K_0(C(\mathrm{S}^2_q))$. 

\begin{cor} \label{PnRn}
Let $n\in\N$. Given the Bott projections $P_{\pm n}$ from Equations \eqref{BP} and \eqref{BPN}, 
and the Powers--Rieffel projections $R_n$ from Equation \eqref{PRP}, 
the following equalities hold in $K_0(C(\mathrm{S}^2_q))$: 
$$
[P_n] = [1]+[R_n]= \left[ \begin{pmatrix}  R_n & 0\\ 0 & 1  \end{pmatrix}\right] ,\qquad [P_{-n}] = [1-R_n]. 
$$
\end{cor}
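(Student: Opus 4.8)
The plan is to prove all the asserted equalities by exploiting the non-degeneracy of the index pairing on $C(\mathrm{S}^2_q)$. By Theorem \ref{T1}, $K_0(C(\mathrm{S}^2_q))\cong\Z\oplus\Z$ is free, so the universal coefficient theorem of Rosenberg and Schochet \cite{rs87} identifies $K^0(C(\mathrm{S}^2_q))\cong\mathrm{Hom}(K_0(C(\mathrm{S}^2_q)),\Z)\cong\Z\oplus\Z$ via the index pairing itself. Evaluating the pairing on the $K_0$-generators $[1]$ and $[P_1]-[1]$ from Corollary \ref{C1} against the two K-homology classes $[(\ev_\infty,0)]$ and $[(\pi_y,\pi_0\oplus\pi_\infty)]$, the values in Theorem \ref{TIP} show that the matrix of index pairings is the $2\times 2$ identity. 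Hence the pairing is unimodular, the two Fredholm modules freely generate $K^0(C(\mathrm{S}^2_q))$, and the evaluation map $K_0(C(\mathrm{S}^2_q))\ra\mathrm{Hom}(K^0(C(\mathrm{S}^2_q)),\Z)$ is injective. Consequently, two $K_0$-classes coincide as soon as they pair identically with $[(\ev_\infty,0)]$ and with $[(\pi_y,\pi_0\oplus\pi_\infty)]$, so it suffices to match these two integers for each asserted equality.

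Next I treat the winding number $+n$ case. The middle equality $[1]+[R_n]=[\,\diag(R_n,1)\,]$ requires no pairing argument: the class of a block-diagonal projection is the sum of the classes of its blocks, which is exactly the definition of addition in $K_0$. For the outer equality I would compute both pairings of $[P_n]$ and of $[1]+[R_n]$. By Theorem \ref{TIP}, $\ip{[(\ev_\infty,0)]}{[P_n]}=1$ and $\ip{[(\pi_y,\pi_0\oplus\pi_\infty)]}{[P_n]}=n$. Using additivity of the pairing together with $\ip{[(\ev_\infty,0)]}{[1]}=1$, $\ip{[(\pi_y,\pi_0\oplus\pi_\infty)]}{[1]}=0$, $\ip{[(\ev_\infty,0)]}{[R_n]}=0$ and $\ip{[(\pi_y,\pi_0\oplus\pi_\infty)]}{[R_n]}=n$ from Theorem \ref{TIP}, the class $[1]+[R_n]$ pairs to $1$ and to $n$ respectively. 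The two pairs of integers agree, so $[P_n]=[1]+[R_n]$ by the first paragraph.

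For the winding number $-n$ case, I would first record that $R_n$ and $1-R_n$ are orthogonal projections with $R_n+(1-R_n)=1$, so by Murray--von Neumann additivity $[1-R_n]=[1]-[R_n]$ in $K_0(C(\mathrm{S}^2_q))$. Hence $[1-R_n]$ pairs to $1-0=1$ with $[(\ev_\infty,0)]$ and to $0-n=-n$ with $[(\pi_y,\pi_0\oplus\pi_\infty)]$. On the other side, Theorem \ref{TIP} gives $\ip{[(\ev_\infty,0)]}{[P_{-n}]}=1$ and $\ip{[(\pi_y,\pi_0\oplus\pi_\infty)]}{[P_{-n}]}=-n$. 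Again the pairs coincide, so $[P_{-n}]=[1-R_n]$.

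The substantive step is the first paragraph: establishing that the index pairing is non-degenerate, indeed unimodular, so that equality of the two integers forces equality of $K_0$-classes. This rests on the torsion-freeness from Theorem \ref{T1}, the universal coefficient theorem, and the explicit pairing values of Theorem \ref{TIP}; everything afterwards is additivity of the pairing and the additivity of $K_0$ on orthogonal and block-diagonal projections, with no further computation required.
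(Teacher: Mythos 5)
Your proposal is correct and follows essentially the same route as the paper: the paper's proof likewise observes that the classes $[(\ev_\infty,0)]$ and $[(\pi_y,\pi_0\oplus\pi_\infty)]$ separate the generators of $K_0(C(\mathrm{S}^2_q))$ from Corollary \ref{C1}, and then reduces the claim to matching the index pairings from Theorem \ref{TIP}. You merely spell out the separation argument (the identity pairing matrix on $[1]$ and $[P_1]-[1]$) and the additivity computations that the paper dismisses as ``straightforward.''
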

\begin{proof}
Since the $K$-homology classes $[(\ev_\infty,0)]$ and $[(\pi_y, \pi_0\oplus\pi_\infty)]$ from Theorem \ref{TIP} 
separate the generators of $K_0(C(\mathrm{S}^2_q))$ from Corollary \ref{C1}, it suffices to show 
that the index pairings coincide, which is straightforward. 
\end{proof}  
Clearly, there are no 1-dimensional projections in $C(\mathrm{S}^2)$ since 
$\mathrm{S}^2$ is connected, so the existence of the 1-dimensional projections 
$R_n$ can be regarded as a quantum effect. 
Note moreover that the index pairing with the $K_0$-classes $[R_n]$ 
reduces to the computation of very simple traces and is also much simpler than the computation of 
the index pairing with the $K_0$-classes determined by the Bott projections. 
In a certain sense,  one can say that the quantization of $\mathrm{S}^2$ leads to a 
significant simplification of the index pairing.

\subsection{Index pairings, generic case} 

In this section, we compute the index pairings for the C*-algebra $C^*_0(z,z^*)$  generated by a 
$q$-normal operator such that $X:=\spec(|z|) \neq [0,\infty)$. As in Section \ref{generic}, 
we assume that $1\notin\spec(|z|)$, and as in the previous section, we state the results 
for unitalization $C^*_0(z,z^*)\dotplus \C$ since the same results apply to the non-unital case 
after some minor modifications. The main difference to the previous section is that now the $K_0$-group 
is generated by simple projections of the type $\chi_A(|z|)\in C^*_0(z,z^*)\dotplus \C$, see 
Theorem \ref{KCzz}, where one has to add the trivial generator $[1]\in K_0(C^*_0(z,z^*)\dotplus \C)$. 

As for any compact set of real numbers, the connected components of $Y$ are closed intervals 
$K_\gamma := [a_\gamma, b_\gamma]$. 
As customary, 
we identify a singleton $\{y\}$ with the closed interval $[y,y]$. Let $\{ K_\gamma : \gamma \in\Gamma\}$ denote the set of the 
connected components of $Y$. For each $\gamma \in\Gamma$, choose a $y_\gamma\in K_\gamma$ 
and consider the Fredholm module 
\[  \label{Fg}
F_\gamma := (\pi_{y_\gamma}, \pi_0\oplus\pi_\infty)
\] 
from Proposition \ref{P1}.  
The next theorem shows that the index pairings with these 
K-homology classes together with the classes $ [(\ev_0,0)]$ and $[(\ev_\infty,0)]$ from \eqref{IP0} 
determine uniquely any  $K_0$-class 
of $C^*_0(z,z^*)\dotplus \C$. 

\begin{thm}  \label{Teo} 
Let $\{ K_\gamma : \gamma \in\Gamma\}$ and $F_\gamma$ be defined as above. 
The index pairing \eqref{IP} defines a non-degenerate pairing between the direct sum of even K-homology classes 
$$
\K := \Z [(\ev_0,0)]\, \oplus  \Big( \underset{\gamma\in\Gamma}{\oplus} \Z [F_\gamma] \Big) \oplus\, \Z[(\ev_\infty,0)]  
$$
and $K_0(C^*_0(z,z^*)\dotplus \C)$. The index pairing is determined by 
\begin{align}
&\ip{[(\ev_\infty,0)]}{[1]} =1, \quad \ip{[(\ev_\infty,0)]}{[\chi_{[0,q)}]} =0, \quad  \ip{[(\ev_\infty,0)]}{[\chi_{(c_j,1)}]} =0, 
\label{ip1}\\
&\ip{[(\ev_0,0)]}{[1]}=1,\quad \hspace{5pt} \ip{[(\ev_0,0)]}{[\chi_{[0,q)}]}=1,\quad\hspace{4pt}  \ip{[(\ev_0,0)]}{[\chi_{(c_j,1)}]}=0,
\label{ip2}\\
&  \ip{[F_\gamma]}{[1]}=0, \quad\hspace{29pt} \ip{[F_\gamma]}{[\chi_{[0,q)}]}=0, 
\label{ip3}\\
& \ip{[F_\gamma]}{[\chi_{(c_j,1)}]}=1\ \ \text{if}\ \ y_\gamma \in (c_j,1),\quad 
 \ip{[F_\gamma]}{[\chi_{(c_j,1)}]}=0 \ \ \text{if}\ \ y_\gamma \notin (c_j,1). \label{ip4}
\end{align}
\end{thm}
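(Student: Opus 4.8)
The plan is to fix an explicit free basis of $K_0(C^*_0(z,z^*)\dotplus\C)$, compute the index pairing of each basis element against each proposed K-homology generator to verify \eqref{ip1}--\eqref{ip4}, and then read off non-degeneracy from the resulting pairing matrix. By Theorem~\ref{KCzz} together with the split exact sequence defining the unitalization, $K_0(C^*_0(z,z^*)\dotplus\C)$ is freely generated by $[1]$, $[\chi_{[0,q)}]$ and the classes $[\chi_{(c_j,1)}]$, $j\in J$, where $Y=(q,1)\cap X$ and $c_j\in I_j$ are as in Remark~\ref{remJ}. The pairings \eqref{ip1} and \eqref{ip2} with $[(\ev_\infty,0)]$ and $[(\ev_0,0)]$ are immediate from \eqref{IP0}: for a scalar projection $p$ one has $\Tr_{\Mat_1(\C)}(\ev_0(p))=p(0)$ and $\Tr_{\Mat_1(\C)}(\ev_\infty(p))=p(\infty)$, and evaluating $1$, $\chi_{[0,q)}$ and $\chi_{(c_j,1)}$ at $0$ and at $\infty$ yields exactly the stated values (using $0\in[0,q)$, $0\notin(c_j,1)$, and that $\chi_{[0,q)},\chi_{(c_j,1)}$ vanish at $\infty$).

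For the pairings with $F_\gamma=(\pi_{y_\gamma},\pi_0\oplus\pi_\infty)$ I would use the trace formula \eqref{IP}. For a scalar projection $f\in C_0(X)$ the operator $\pi_{y_\gamma}(f)-(\pi_0\oplus\pi_\infty)(f)$ is diagonal in the basis $\{e_k\}$, acting by $f(q^ky_\gamma)-f(0)$ for $k>0$ and by $f(q^ky_\gamma)-f(\infty)$ for $k\le 0$, since $(\pi_0\oplus\pi_\infty)(f)=f(0)\Pi_+ +f(\infty)\Pi_-$ by \eqref{repe}. For $f=1$ and for $f=\chi_{[0,q)}$ this difference vanishes identically --- in the latter case because $q^ky_\gamma\in[0,q)$ exactly for $k\ge 1$ while $q^ky_\gamma\notin[0,q)$ for $k\le 0$, matching the constants $\chi_{[0,q)}(0)=1$ and $\chi_{[0,q)}(\infty)=0$ term by term --- so both traces are $0$, giving \eqref{ip3}. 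For $f=\chi_{(c_j,1)}$ one has $f(0)=f(\infty)=0$, so the pairing collapses to the orbit count $\Tr_{\lZ}(\pi_{y_\gamma}(\chi_{(c_j,1)}))=\sum_{k\in\Z}\chi_{(c_j,1)}(q^ky_\gamma)=\#\{k\in\Z:\ c_j<q^ky_\gamma<1\}$; this difference is a finite-rank (hence trace class) operator, so \eqref{IP} applies.

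The crux is to show this count equals $1$ when $y_\gamma\in(c_j,1)$ and $0$ otherwise, and I expect this elementary but slightly delicate $q$-orbit bookkeeping to be \emph{the main obstacle}. Since $y_\gamma\in(q,1)$, the sequence $k\mapsto q^ky_\gamma$ is strictly decreasing, and one checks directly that $q^ky_\gamma<1$ holds precisely for $k\ge 0$ (already $q^{-1}y_\gamma>1$ because $y_\gamma>q$). Among these indices the lower bound $q^ky_\gamma>c_j$ can hold only at $k=0$: for $k\ge 1$ we have $q^ky_\gamma\le qy_\gamma<q<c_j$, using $y_\gamma<1$ and $c_j>q$. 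Hence the orbit meets $(c_j,1)$ at most at $k=0$, and it does so iff $y_\gamma>c_j$, i.e.\ iff $y_\gamma\in(c_j,1)$ (recall $y_\gamma<1$). This establishes \eqref{ip4}.

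Finally, I would order the connected components $K_\gamma=[a_\gamma,b_\gamma]$ of $Y$ from left to right and the gaps $I_j$ accordingly, so that $c_j\in I_j$ lies just below $K_j$ (with $b_0:=q$); then $y_\gamma>c_j$ iff $\gamma\ge j$, so the matrix $\big(\ip{[F_\gamma]}{[\chi_{(c_j,1)}]}\big)_{\gamma,j}$ is lower triangular with entries $1$ on and below the diagonal. Grouping $[(\ev_0,0)],[(\ev_\infty,0)]$ together against $[\chi_{[0,q)}],[1]$, and the $[F_\gamma]$ against the $[\chi_{(c_j,1)}]$, the full pairing matrix becomes block diagonal $\left(\begin{smallmatrix} B & 0 \\ 0 & M \end{smallmatrix}\right)$ with $B=\left(\begin{smallmatrix}1&1\\0&1\end{smallmatrix}\right)$ unimodular and $M$ the unitriangular all-ones block. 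Both blocks are invertible over $\Z$, so the pairing is perfect. Concretely, if a finite combination on either side pairs to $0$ against every generator of the other, the triangular structure forces all coefficients to vanish --- via consecutive differences of the partial sums $\sum_{j\le\gamma}a_j$ on the $K_0$-side and of the tail sums $\sum_{\gamma\ge j}\beta_\gamma$ on the $\K$-side, together with the $2\times 2$ block $B$ for the $[1],[\chi_{[0,q)}]$ and $[(\ev_0,0)],[(\ev_\infty,0)]$ coefficients --- which yields non-degeneracy in both the finite and the infinite case.
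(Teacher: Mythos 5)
Your proposal is correct and follows essentially the same route as the paper: the same evaluation of $(\ev_0,0)$ and $(\ev_\infty,0)$ via \eqref{IP0}, the same identification of $\pi_{y_\gamma}(\chi_{[0,q)})=\Pi_+$ and of $\pi_{y_\gamma}(\chi_{(c_j,1)})$ as the rank-one projection onto $\lin\{e_0\}$ (or $0$) via the $q$-orbit count, and the same triangularity/induction argument for non-degeneracy. The only cosmetic caveats are that in the infinite case the components of $Y$ need not be order-isomorphic to $\N$, so the ``lower triangular matrix'' should be read, as you in fact do at the end, as the finite-support consecutive-difference argument applied to each finite combination, and that ``perfect'' should be weakened to ``non-degenerate'', which is all the theorem asserts.
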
 

\begin{proof} We first compute the index pairings for the generators of $K_0(C^*_0(z,z^*)\dotplus \C)$. 
Equations \eqref{ip1} and \eqref{ip2} are a simple consequence of  \eqref{IP0} by evaluating the 
1-di\-men\-sional projections in $\infty$ and $0$, respectively. 
From \eqref{repe} and the just mentioned evaluation maps, 
it also follows that 
\[ \label{Rep1}
(\pi_0\oplus\pi_\infty)(1) = 1,\quad 
(\pi_0\oplus\pi_\infty)(\chi_{[0,q)}) = \Pi_+, \quad 
(\pi_0\oplus\pi_\infty)(\chi_{(c_j,1)}) = 0. 
\] 
Moreover, \eqref{repy} yields $\pi_{y_\gamma}(\chi_A) \hs e_n =e_n$ 
\,if\, $q^n y_\gamma\in A\subset [0,\infty)$ 
and $\pi_{y_\gamma}(\chi_A)\hs  e_n =0$ otherwise. 
Therefore $\pi_{y_\gamma}(1) = 1$ and 
\[  \label{Rep2}
 \pi_{y_\gamma}(\chi_{[0,q)}) = \Pi_+, \quad 
\pi_{y_\gamma}(\chi_{(c_j,1)}) =0 \ \,\text{if} \ \, y_\gamma \notin (c_j,1), \quad 
\pi_{y_\gamma}(\chi_{(c_j,1)}) =\Pi_{e_0} \ \,\text{if} \ \, y_\gamma \in (c_j,1),
\]
where $\Pi_{e_0}$ denotes the 1-dimensional orthogonal projection onto $\lin\{e_0\}$. 
Combining \eqref{Rep1} and \eqref{Rep2} with \eqref{IP} yields for $F_\gamma$ from \eqref{Fg} 
\begin{align} 
& \ip{[F_\gamma]}{[1]}=\Tr_{\lZ} (1-1)=0, \quad\ip{[F_\gamma]}{[\chi_{[0,q)}]}=\Tr_{\lZ} (\Pi_+ -\Pi_+)=0, \label{Fchi} \\
& \ip{[F_\gamma]}{[\chi_{(c_j,1)}]}=\Tr_{\lZ} (0-0)=0\ \ \text{if} \ \ y_\gamma \notin (c_j,1), \label{Fchi0}\\
 &\ip{[F_\gamma]}{[\chi_{(c_j,1)}]}=\Tr_{\lZ} (\Pi_{e_0})=1\ \ \text{if} \ \ y_\gamma \in (c_j,1). \label{Fchi1}
\end{align} 
This finishes the proof of \eqref{ip1}--\eqref{ip4}. 

To show the non-degeneracy of the index pairing, let 
$$    
p:= l [\chi_{[0,q)}] + \msum{k=1}{N} n_k [\chi_{(c_{j_k},1)}] + m [1] \in K_0(C^*_0(z,z^*)\dotplus \C),\quad N\in\N ,\ \, l, n_k,m\in\Z, 
$$
and suppose that $\ip{F}{p} =0$ for all $F\in\K$. 
Pairing first with $[(\ev_\infty,0)]$ and then with $[(\ev_0,0)]$ gives $m=0$ and $l=0$ 
by  \eqref{ip1} and \eqref{ip2}. Thus $p=  \msum{k=1}{N} n_k [\chi_{(c_{j_k},1)}]$.

Without loss of generality, 
we may assume that $c_{j_1} > \ldots >c_{j_N}$.  As $c_{j_N}$ and $c_{j_{N-1}}$ belong to different 
connected components of $(q,1)\setminus Y$, there exists a $\gamma_N\in\Gamma$ 
such that $c_{j_{N-1}} > y_{\gamma_N}>c_{j_N}$. 
Then  \eqref{Fchi0} and \eqref{Fchi1} yield   
$0= \ip{[F_{\gamma_N}]}{[p]} = n_N $. 
Continuing inductively,  
choosing in each step a $\gamma_k\in\Gamma$ such that $c_{j_{K-1}} > y_{\gamma_k}>c_{j_k}$, 
where we set $c_{j_{0}}:=1$,  it follows that $n_N = \cdots =n_1=0$, hence $p=0$. 

Finally, let 
$$
F:= l [(\ev_0,0)] + \msum{k=1}{N} n_k  [F_{\gamma_k}]  + m [(\ev_\infty,0)] \in \K,\quad N\in\N ,\ \, l, n_k,m\in\Z, 
$$ 
and suppose that $\ip{F}{p} =0$ for all $p\in K_0(C^*_0(z,z^*)\dotplus \C)$. Similarly to the above, we may assume that 
$y_{\gamma_1} > \ldots > y_{\gamma_N}$. As each $y_{\gamma_k}$ belongs to a different connected component of $Y$, 
there exist ${j_k}\in J$ such that $y_{\gamma_k} >c_{j_k}> y_{\gamma_{k+1}}$  for $k=1,\ldots, N$, where 
$y_{\gamma_{N+1}}:= q$. 
From \eqref{ip1}, \eqref{ip2} and \eqref{ip4}, we obtain 
$0= \ip{F}{[\chi_{(c_{j_1},1)}] } = n_1$. Continuing by induction on $k=2,\ldots,N$, and applying in each step the same argu\-ment, 
we conclude that $n_2=\cdots = n_N=0$. Thus $F= l [(\ev_0,0)] + m [(\ev_\infty,0)]$. 
Now  \eqref{ip1}  and \eqref{ip2} imply first $0=  \ip{F}{[\chi_{[0,q)}]} = l$ and then $0=  \ip{F}{[1]} = m \ip{ [(\ev_\infty,0)]}{[1]} = m$, 
therefore $F=0$. 
\end{proof} 

As an application of the index pairing in Theorem \ref{Teo}, we will use elementary projections 
$\chi_A\in C^*_0(z,z^*)\dotplus \C$ to 
give an alternative description of the $K_0$-classes of the non-commutative complex line bundles determined by the 
Bott projections and Powers--Rieffel type projections from Section \ref{BPRproj}.  

\begin{cor} \label{corPRchi}
For $n\in\N$, let $P_{\pm n}$ denote the  Bott projections  defined in \eqref{BP} and \eqref{BPN}, 
and let $R_n$ denote the Powers--Rieffel type projections from \eqref{PRP}. 
Then the following equalities hold in $K_0(C^*_0(z,z^*)\dotplus \C)$. 
\begin{align}
&[P_n] = [1] +  [R_n]   = [1] + n \hs  [\chi_{(q,1)}] = \left[ \begin{pmatrix}  1 & 0\\ 0 & \chi_{(q^n,1)}  \end{pmatrix}\right], \label{chip}\\ 
&[P_{-n}] = [1] - [R_n]  = [1] -  n\hs   [\chi_{(q,1)}] = [1 - \chi_{(q^n,1)}] .     \label{chipn}
\end{align}   
\end{cor}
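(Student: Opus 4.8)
The plan is to deduce all of \eqref{chip} and \eqref{chipn} from the non-degeneracy of the index pairing established in Theorem~\ref{Teo}, exactly in the spirit of Corollary~\ref{PnRn} for the compact case. Since that pairing between $\K$ and $K_0(C^*_0(z,z^*)\dotplus\C)$ is non-degenerate in the $K_0$-variable (the implication ``$\ip{F}{p}=0$ for all $F\in\K$ $\Rightarrow$ $p=0$'' is precisely what was proved there), two $K_0$-classes coincide as soon as they pair identically with each generator $[(\ev_0,0)]$, $[(\ev_\infty,0)]$ and $[F_\gamma]$, $\gamma\in\Gamma$, of $\K$. Thus the entire proof reduces to checking that, within each displayed line, all four expressions yield the same three index numbers: one for $[(\ev_0,0)]$, one for $[(\ev_\infty,0)]$, and the common value against every $[F_\gamma]$.

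For the outer terms $[P_{\pm n}]$ and the terms involving $[R_n]$, the pairings with $[(\ev_\infty,0)]$ and with $[F_\gamma]=[(\pi_{y_\gamma},\pi_0\oplus\pi_\infty)]$ are already recorded in Theorem~\ref{TIP}: they give $1$ and $\pm n$ for the Bott projections, and $0$ and $n$ for $R_n$. The only extra input I would need is the pairing with $[(\ev_0,0)]$, which follows at once from \eqref{evP} and \eqref{evR} via \eqref{IP0}: namely $\Tr(\ev_0(P_{\pm n}))=\Tr(\diag(0,1))=1$ and $\ev_0(R_n)=0$. Hence $[P_{\pm n}]$ and $[1]\pm[R_n]$ all pair to $1$ with $[(\ev_0,0)]$.

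The genuinely new computation concerns the elementary projections $\chi_{(q,1)}$, $\chi_{(q^n,1)}$ and $1-\chi_{(q^n,1)}$, and here I would evaluate the three representations directly. Since $0,\infty\notin(q^n,1)$, both $\ev_0$ and $\ev_\infty$ annihilate $\chi_{(q,1)}$ and $\chi_{(q^n,1)}$ and send $1-\chi_{(q^n,1)}$ to $1$, which immediately fixes the $\ev_0$- and $\ev_\infty$-pairings. For the $[F_\gamma]$-pairing I invoke \eqref{repy}, whereby $\pi_{y_\gamma}(\chi_A)e_k=e_k$ exactly when $q^k y_\gamma\in A$. The decisive step is the counting: for $y_\gamma\in(q,1)$ one verifies that $q^k y_\gamma\in(q^n,1)$ holds precisely for $k=0,1,\dots,n-1$, so that $\pi_{y_\gamma}(\chi_{(q^n,1)})$ is the rank-$n$ projection onto $\lin\{e_0,\dots,e_{n-1}\}$ (and $\pi_{y_\gamma}(\chi_{(q,1)})=\Pi_{e_0}$), a count that is \emph{independent} of the component $\gamma$. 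Combined with $(\pi_0\oplus\pi_\infty)(\chi_A)=0$ for these $A$, the trace formula \eqref{IP} then gives $\ip{[F_\gamma]}{[\chi_{(q,1)}]}=1$, $\ip{[F_\gamma]}{[\chi_{(q^n,1)}]}=n$, and $\ip{[F_\gamma]}{[1-\chi_{(q^n,1)}]}=-n$ for every $\gamma$.

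Assembling the numbers, every expression on the right of \eqref{chip} pairs to $(1,1,n)$ with $\big([(\ev_0,0)],[(\ev_\infty,0)],[F_\gamma]\big)$, while every expression in \eqref{chipn} pairs to $(1,1,-n)$; these agree with the pairings of $[P_n]$ and $[P_{-n}]$ respectively, so non-degeneracy forces all the claimed equalities. I expect the only real obstacle to be the index-counting for $\pi_{y_\gamma}(\chi_{(q^n,1)})$, that is, confirming that exactly $n$ basis vectors survive and that this number does not depend on $\gamma$ (this is what matches the constant winding number $n$ of the line bundle); once that is in hand, the remainder is routine bookkeeping with \eqref{IP0} and \eqref{IP}.
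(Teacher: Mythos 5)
Your proposal is correct and follows essentially the same route as the paper: both reduce the claimed identities to the non-degeneracy statement of Theorem~\ref{Teo} and then match the index pairings against $[(\ev_0,0)]$, $[(\ev_\infty,0)]$ and the $[F_\gamma]$, with the key computation in each case being that $\pi_{y_\gamma}(\chi_{(q^n,1)})$ is the rank-$n$ projection onto $\lin\{e_0,\dots,e_{n-1}\}$ independently of $\gamma$. The only cosmetic difference is that the paper obtains the pairings of $[\chi_{(q,1)}]$ by citing Theorem~\ref{Teo} with $c_j$ replaced by $q$, whereas you recompute them directly; the content is identical.
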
 
\begin{proof} 
As  $1\notin X$ and thus $\{q^k : k\in\Z\}\cap X=\emptyset$
by the $q$-invariance of $X=\spec(|z|)$, 
we know that $\chi_{(q^n,\hs q^k)}$ is a projection in $C_0(X)\subset C^*_0(z,z^*)\dotplus \C$  
for all  $n,k\in\Z$, $n\hsp>\hsp k$. 

We will prove \eqref{chip} and \eqref{chipn}  by showing that the index pairings with the K-ho\-mo\-logy classes 
$[(\ev_0,0)]$,  $[(\ev_\infty,0)]$ and  $[F_\gamma]$, $\gamma \in\Gamma$, coincide. Then, 
by the non-degeneracy statement of Theorem~\ref{Teo}, 
Equations \eqref{chip} and \eqref{chipn}  yield identities 
in K-theory. 

Applying first Theorem \ref{TIP}, 
and then Theorem \ref{Teo} with $c_j$ replaced by $q$, 
one readily sees that 
\begin{align}   \label{PR1}
&\ip{[(\ev_\infty,0)]\hs }{\hsp[P_{ \pm n}]} = \ip{[(\ev_\infty,0)]\hs }{[1] \pm  [R_n] }  
=\ip{[(\ev_\infty,0)]\hs }{[1] \pm n [\chi_{(q,1)}] }  =1, \\
&\ip{[(\ev_0,0)]\hs }{\hsp[P_{ \pm n}]} 
= \ip{[(\ev_0,0)]\hs }{[1] \pm  [R_n] } = \ip{[(\ev_0,0)]\hs }{[1] \pm n [\chi_{(q,1)}] }  =1,   \label{PR3}\\
&\ip{[F_\gamma] \hs }{\hsp[P_{\pm n}]} =\ip{[F_\gamma]\hs }{[1] \pm  [R_n]} 
= \ip{[F_\gamma] \hs }{\hsp[1] \pm n [\chi_{(q,1)}]} =\pm n.  \label{PR2}
\end{align}
As a consequence, $[P_{\pm n}] = [1] \pm  [R_n] = [1] \pm n \hs  [\chi_{(q,1)}] $. 
By the obvious relations in K-theory, it now suffices to verify that 
$ [\chi_{(q^n,1)}]=n \hs  [\chi_{(q,1)}]$. Clearly, 
\[  \label{evi0}
\ip{[(\ev_{\hsp p},0)]\hs }{[\chi_{(q^n,1)}] } \,= \,0 \,= \, \ip{[(\ev_{\hsp p},0)]\hs }{n \hs[\chi_{(q,1)}] } , \quad  p\in\{0,\infty\}, 
\]
since all evaluation maps give 0. 
This also shows that $(\pi_0\oplus \pi_\infty)(\chi_{(q^n,1)})=0$ by \eqref{repe}. 
Furthermore, Equation \eqref{repy} implies that  
$\pi_y( \chi_{(q^n,1)})$ is the orthogonal projection onto $\lin\{ e_0,\ldots,e_{n-1}\}$ 
for all $y\in Y\subset (q,1)$. Therefore the index pairing \eqref{IP} 
yields 
$$  
\ip{[F_\gamma] \hs }{\hsp [\chi_{(q^n,1)}]} =  \Tr_{\lZ} \big( \pi_{y_\gamma}( \chi_{(q^n,1)})  \big) =n 
= n  \hs \Tr_{\lZ} \big( \pi_{y_\gamma}( \chi_{(q,1)})  \big)
 = \ip{[F_\gamma] \hs }{n\hs [\chi_{(q,1)}]}, 
$$
which completes the proof. 
\end{proof} 

Note that the projections $ \chi_{(q^n,1)}$ are utterly elementary, i.e., continuous functions 
with values in $\{0,1\}$. In particular, the computation of  index pairing reduces to its simplest possible form, 
namely to the calculation of a trace of a finite-dimensional projection. 
Also, by unitary equivalence of $K_0$-classes, we obtain from \eqref{chip} and \eqref{chipn} 
the following isomorphisms of finitely generated 
projective modules: 
\begin{align*}
(C^*_0(z,z^*)\dotplus \C)^2 \hs P_{-n} &\ \cong\  (C^*_0(z,z^*)\dotplus \C) \chi_{(q^n,1)}, \\
(C^*_0(z,z^*)\dotplus \C)^2 \hs P_n     &\ \cong\  (C^*_0(z,z^*)\dotplus \C)\, \oplus\, (C^*_0(z,z^*)\dotplus \C) \chi_{(q^n,1)}, 
\end{align*} 
where the right hand sides are considerably more simple. 

The interest in the projections $P_{\pm n}$ arose from the observation that 
they can be regarded as deformations of the classical Bott projections representing complex line bundles of winding number $\pm n$ 
over the 2-sphere. Recall that we defined the C*-al\-ge\-bra 
of the quantum 2-sphere as $C(\mathrm{S}^2_q) := C^*_0(z,z^*)\dotplus \C$, where 
$\spec(|z|)=[0,\infty)$, because only in that case the deformation preserves the classical K-groups. However, if one wants to 
view any $q$-normal operator $z$ as a deformation of the complex plane, then the deformations 
satisfying $\spec(|z|)\neq [0,\infty)$ lead to a substantial simplification of the description of complex line bundles and 
the index computation. 

\section*{Acknowledgements} 
This work was partially supported by the Simons--Foundation grant 346300, 
the Polish Government grant 3542/H2020/2016/2,  the EU funded grant H2020-MSCA-RISE-2015-691246-QUANTUM DYNAMICS, and CIC-UMSNH.

\end{document}